\newcommand{\Paneitz}{P}
\newcommand{\dilation}{\delta}
\newcommand{\Hsymbol}{S_{H}}
\newcommand{\Hpsido}{\Psi_{H}}
\newcommand{\HSobolev}{W_{H}}
\newcommand{\ovone}{\bar{1}}
	\def\MR#1{}
\title{CR Paneitz operator and embeddability}
\author{Yuya Takeuchi}
\address{Division of Mathematics \\ Institute of Pure and Applied Sciences \\ University of Tsukuba
	\\ 1- 1- 1 Tennodai, Tsukuba, Ibaraki 305-8571 Japan}
\email{ytakeuchi@math.tsukuba.ac.jp, yuya.takeuchi.math@gmail.com}
\dedicatory{Dedicated to Professor Linda Preiss Rothschild on the occasion of her 80-th birthday.}
\subjclass[2020]{32V20, 58J50}
\keywords{CR manifold, CR Paneitz operator, embeddability}
\thanks{This work was supported by JSPS KAKENHI Grant Number JP25K17247.}
\begin{document}

\begin{abstract}
	In this article,
	we give a brief survey of recent developments on
	relations between global embeddability of a closed strictly pseudoconvex CR manifold
	and the CR Paneitz operator.
\end{abstract}

\maketitle

\section{Introduction}
\label{section:introduction}

The \emph{CR Paneitz operator} is a CR invariant linear differential operator
with leading term the square of the sub-Laplacian.
This operator was first found by Graham~\cites{Graham1983,Graham1983-2,Graham1984}
on the boundary of the Siegel domain and the ball
as the obstruction of a smooth solution of the Dirichlet problem
with respect to the complex hyperbolic metric.
He called this operator as the ``compatibility operator.''
Graham and Lee~\cite{Graham-Lee1988} generalized this operator
to the boundary of a strictly pseudoconvex domain in $\bbC^{2}$
and obtained its explicit formula in terms of the Tanaka-Webster connection.
Hirachi~\cite{Hirachi1993} observed that this operator appears
in the transformation law of the logarithmic singularity of the \Szego kernel under conformal change,
and proved that it is CR invariant.
Note that this operator is an example of the CR GJMS operators
defined by Gover and Graham~\cite{Gover-Graham2005}.
The name ``CR Paneitz operator'' was originally introduced by Chiu~\cite{Chiu2006},
motivated by its analogy with the Paneitz operator~\cite{Paneitz2008},
a conformally invariant linear differential operator
with leading term the square of the Laplacian.

The CR Paneitz operator is closely related to fundamental problems in CR geometry.
For example,
Chanillo, Chiu, and Yang~\cite{Chanillo-Chiu-Yang2012} have proved that
the non-negativity of the CR Paneitz operator together with positive Tanaka-Webster scalar curvature
implies the embeddability of the CR manifold.
Conversely,
the author~\cite{Takeuchi2020-Paneitz} has shown that
the non-negativity of the CR Paneitz operator follows from the embeddability.
Moreover,
Cheng, Malchiodi, and Yang~\cite{Cheng-Malchiodi-Yang2017} have obtained a positive mass theorem
under the assumptions of the non-negativity of the CR Paneitz operator and positive Tanaka-Webster scalar curvature.
Combining these results yields that
the CR Yamabe problem is solved affirmatively for the embeddable case.
Note that Cheng, Malchiodi, and Yang~\cite{Cheng-Malchiodi-Yang2023} have recently found that
there exists an example of non-embeddable CR manifolds such that
the CR Yamabe problem has no solutions.

The aim of this survey is to explain analytic properties of the CR Paneitz operator.
In particular,
we will focus on relations between this operator and embeddability.
This paper is organized as follows.
In \cref{section:preliminaries},
we recall basic definitions and facts on CR manifolds, Tanaka-Webster connection, and embeddability.
In \cref{section:CR-Paneitz-operator},
we give a definition of the CR Paneitz operator following \cite{Takeuchi2020-Paneitz}
and explain some basic properties of this operator.
\cref{section:non-negativity-of-the-CR-Paneitz-operator-and-embeddability} deals with
relations between the non-negativity of the CR Paneitz operator and the embeddability.
\cref{section:Heisenberg-calculus} gives a brief exposition of the Heisenberg calculus,
which is a theory of pseudodifferential operators reflecting on the geometric structure of a CR manifold.
\cref{section:spectrum-of-the-CR-Paneitz-operator-and-embeddability} is devoted to
a brief exposition of spectral properties of the CR Paneitz operator.
In \cref{section:Rossi-sphere},
we study the CR Paneitz operator on the Rossi sphere via spherical harmonics.

\section{Preliminaries}
\label{section:preliminaries}

\subsection{CR structures}

Let $M$ be a smooth three-dimensional manifold without boundary.
A \emph{CR structure} is a complex line subbundle $T^{1, 0} M$
of the complexified tangent bundle $TM \otimes \bbC$ such that
\begin{equation}
	T^{1, 0} M \cap T^{0, 1} M = 0,
\end{equation}
where $T^{0, 1} M$ is the complex conjugate of $T^{1, 0} M$ in $T M \otimes \bbC$.
Introduce an operator $\delbb \colon C^{\infty}(M) \to \Gamma((T^{0, 1} M)^{\ast})$ by
\begin{equation}
	\delbb f = (d f)|_{T^{0, 1} M}.
\end{equation}
A smooth function $f$ is called a \emph{CR holomorphic function}
if $\delbb f = 0$.
A \emph{CR pluriharmonic function} is a real-valued smooth function
that is locally the real part of a CR holomorphic function.
We denote by $\scrP$ the space of CR pluriharmonic functions.
A CR manifold $(M, T^{1, 0} M)$ is said to be \emph{embeddable}
if there exists a smooth embedding $F = (f_{1}, \dots, f_{N}) \colon M \to \bbC^{N}$
such that each $f_{i}$ is CR holomorphic.

A CR structure $T^{1, 0} M$ is said to be \emph{strictly pseudoconvex}
if there exists a nowhere-vanishing real one-form $\theta$ on $M$
such that
$\theta$ annihilates $T^{1, 0} M$ and
\begin{equation}
	- \sqrt{- 1} d \theta (Z, \ovZ) > 0, \qquad
	0 \neq Z \in T^{1, 0} M;
\end{equation}
we call such a one-form a \emph{contact form}.
The triple $(M, T^{1, 0} M, \theta)$ is called a \emph{pseudo-Hermitian manifold}.
Note that $H M \coloneqq \Ker \theta$ defines a cooriented contact structure on $M$.
Denote by $T$ the \emph{Reeb vector field} with respect to $\theta$;
that is, the unique vector field satisfying
\begin{equation}
	\theta(T) = 1, \qquad T \contr d\theta = 0.
\end{equation}
Let $Z_{1}$ be a local frame of $T^{1, 0} M$,
and set $Z_{\ovone} = \overline{Z_{1}}$.
Then
$(T, Z_{1}, Z_{\ovone})$ gives a local frame of $T M \otimes \bbC$,
called an \emph{admissible frame}.
Its dual frame $(\theta, \theta^{1}, \theta^{\ovone})$
is called an \emph{admissible coframe}.
The two-form $d \theta$ is written as
\begin{equation}
	d \theta = \sqrt{- 1} l_{1 \ovone} \theta^{1} \wedge \theta^{\ovone},
\end{equation}
where $l_{1 \ovone}$ is a positive function.
We use $l_{1 \ovone}$ and its multiplicative inverse $l^{1 \ovone}$
to raise and lower indices.

\subsection{Tanaka-Webster connection}

Let $(M, T^{1, 0} M)$ be a strictly pseudoconvex CR manifold of dimension three.
A contact form $\theta$ induces a canonical connection $\nabla$,
called the \emph{Tanaka-Webster connection} with respect to $\theta$~\cites{Tanaka1962,Webster1978}.
It is defined by
\begin{equation}
	\nabla T
	= 0,
	\quad
	\nabla Z_{1}
	= \omega_{1}^{\ 1} Z_{1},
	\quad
	\nabla Z_{\ovone}
	= \omega_{\ovone}^{\ \ovone} Z_{\ovone}
	\quad
	\rbra*{ \omega_{\ovone}^{\ \ovone}
	= \overline{\omega_{1}^{\ 1}} }
\end{equation}
with the following structure equations:
\begin{gather}
	d \theta^{1}
	= \theta^{1} \wedge \omega_{1}^{\ 1}
	+ A^{1}_{\ \ovone} \theta \wedge \theta^{\ovone}, \\
	d l_{1 \ovone}
	= \omega_{1}^{\ 1} l_{1 \ovone}
	+ l_{1 \ovone} \omega_{\ovone}^{\ \ovone}.
\end{gather}
The tensor $A_{1 1} = \overline{A_{\ovone \ovone}}$
is called the \emph{Tanaka-Webster torsion}.
The curvature form
$\Omega_{1}^{\ 1} = d \omega_{1}^{\ 1}$
of the Tanaka-Webster connection satisfies
\begin{equation} \label{eq:curvature-form-of-TW-connection}
	\Omega_{1}^{\ 1}
	\equiv \Scal \cdot l_{1 \ovone} \, \theta^{1} \wedge \theta^{\ovone}
		\qquad \text{modulo } \theta,
\end{equation}
where $\Scal$ is the \emph{Tanaka-Webster scalar curvature}.
We denote the components of a successive covariant derivative of a tensor
by subscripts preceded by a comma,
for example, $K_{1 \ovone , 1}$;
we omit the comma if the derivatives are applied to a function.
We use the index $0$ for the component $T$ or $\theta$ in our index notation.
In this notation,
the operator $\delbb$ is given by
\begin{equation}
\label{eq:tensorial-rep-of-delb}
	\delbb f
	= f_{\ovone} \theta^{\ovone}.
\end{equation}
The commutators of the second derivatives for $f \in C^{\infty}(M)$ are given by
\begin{equation}
\label{eq:commutator-of-covariant-derivatives-on-functions}
	f_{1 \ovone} - f_{\ovone 1}
	= \sqrt{- 1} l_{1 \ovone} f_{0},
	\qquad
	f_{0 1} - f_{1 0}
	= A_{1 1} f^{1};
\end{equation}
see~\cite{Lee1988}*{(2.14)}.
Define the \emph{Kohn Laplacian} $\Box_{b}$ and the \emph{sub-Laplacian} $\Delta_{b}$ by
\begin{equation}
\label{eq:Kohn-Laplacian}
	\Box_{b} f
	\coloneqq - f_{\ovone}^{\ \ovone},
	\qquad
	\Delta_{b} f
	\coloneqq (\Box_{b} + \overline{\Box}_{b}) f
\end{equation}
for $f \in C^{\infty}(M)$.
It follows from \cref{eq:commutator-of-covariant-derivatives-on-functions} that
\begin{equation}
\label{eq:complex-conjugate-of-Kohn-Laplacian}
	\overline{\Box}_{b}
	= \Box_{b} - \sqrt{- 1} T.
\end{equation}
The second derivatives of a $(1, 0)$-form $K = K_{1} \theta^{1}$
satisfy the following commutation relations:
\begin{gather}
\label{eq:commutator-of-covariant-derivatives-on-tensors1}
	K_{1, 1 \ovone} - K_{1, \ovone 1}
	= \sqrt{- 1} l_{1 \ovone} K_{1, 0} + \Scal \cdot l_{1 \ovone} K_{1}, \\
\label{eq:commutator-of-covariant-derivatives-on-tensors2}
	K_{1, 0 \ovone} - K_{1, \ovone 0}
	= A^{1}_{\ \ovone} K_{1, 1} + A^{1}_{\ \ovone, 1} K_{1}
	= (A^{1}_{\ \ovone} K_{1})_{, 1};
\end{gather}
see~\cite{Lee1988}*{Lemma 2.3}.
In particular,
\begin{equation}
	{f_{0 1}}^{1}
	= {f_{1 0}}^{1} + (A_{1 1} f^{1}) {{}_{,}}^{1}
	= {f_{1}}^{1} {}_{0} + (A^{1 1} f_{1}) {}_{, 1}
		+ (A^{\ovone \ovone} f_{\ovone}) {}_{, \ovone}.
\end{equation}
This implies that
\begin{equation}
\label{eq:commutator-of-Kohn-Laplacian-and-its-conjugate}
	\comm{\Box_{b}}{\overline{\Box}_{b}}
	= \comm{\overline{\Box}_{b} + \sqrt{- 1} T}{\overline{\Box}_{b}}
	= \sqrt{- 1} \comm{T}{\overline{\Box}_{b}}
	= \calQ - \overline{\calQ},
\end{equation}
where $\calQ f = \sqrt{- 1} (A^{\ovone \ovone} f_{\ovone}) {}_{, \ovone}$
and $\overline{\calQ}$ is the complex conjugate of $\calQ$.

\subsection{Kohn Laplacian and embeddability}

Let $(M, T^{1, 0} M)$ be a closed three-dimensional strictly pseudoconvex CR manifold
and $\theta$ be a contact form on $M$.
We consider $\Box_{b}$ as an unbounded operator on $L^{2}(M)$ with domain
\begin{equation}
	\Dom \Box_{b}
	= \Set{f \in L^{2}(M) | \text{$\Box_{b} f$ in the weak sense is in $L^{2}(M)$}}.
\end{equation}
This operator is known to be non-negative and self-adjoint.
In particular,
the spectrum $\Spec \Box_{b}$ of $\Box_{b}$ is contained in $\clop{0}{\infty}$.
Moreover,
it is discrete except zero.

\begin{theorem}[\cite{Burns-Epstein1990-Embed}*{Theorem 1.3 and Corollary 1.11};
see also \cite{Hsiao-Marinescu2017}*{Theorem 1.7}]
\label{thm:spectrum-of-Kohn-Laplacian}
	The set $\Spec \Box_{b} \cap \opop{0}{\infty}$ is a discrete subset of $\opop{0}{\infty}$.
	Moreover,
	for any $\lambda \in \Spec \Box_{b} \cap \opop{0}{\infty}$,
	the eigenspace $\Ker (\Box_{b} - \lambda I)$ is a finite dimensional subspace of $C^{\infty}(M)$.
\end{theorem}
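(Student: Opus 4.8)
The plan is to deduce both assertions from the spectral theorem once a microlocal regularity analysis of $\Box_{b}$ is in hand, the whole difficulty being concentrated on a single characteristic direction. Since $\Box_{b}$ is non-negative and self-adjoint, for each Borel set $I \subseteq \clop{0}{\infty}$ we have the spectral projection $E_{I}$ of $\Box_{b}$; and if $\Box_{b} f = \lambda f$ with $\lambda > 0$, then for every $g \in \Ker \Box_{b}$ one computes $\lambda (f, g)_{L^{2}} = (\Box_{b} f, g)_{L^{2}} = (f, \Box_{b} g)_{L^{2}} = 0$, so every positive eigenfunction is orthogonal to $\Ker \Box_{b} = \Ker \delbb$. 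I would reduce the theorem to the single claim that for all $0 < \epsilon \leq \Lambda$ the projection $E_{[\epsilon, \Lambda]}$ has finite rank with range contained in $C^{\infty}(M)$. Indeed, finite rank on every such interval forbids any accumulation point of $\Spec \Box_{b}$ inside $\opop{0}{\infty}$ and any continuous spectrum there, while the special case $\epsilon = \Lambda = \lambda$ exhibits $\Ker(\Box_{b} - \lambda I)$ as the finite-dimensional range of $E_{\{\lambda\}}$; smoothness of the ranges then gives $\Ker(\Box_{b} - \lambda I) \subseteq C^{\infty}(M)$ for each $\lambda > 0$.

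To prove the reduced claim I would study $\Box_{b} = \delbb^{\ast} \delbb$ microlocally. On a three-dimensional strictly pseudoconvex CR manifold its Heisenberg principal symbol is invertible on all of $T^{\ast} M \setminus 0$ except along a single characteristic ray $\Sigma$, one of the two rays annihilating $H M$, namely the ray on which the CR functions concentrate. Away from $\Sigma$ the operator is elliptic in the Heisenberg calculus, hence admits a parametrix there and satisfies a subelliptic estimate gaining two Heisenberg derivatives; in particular any $u$ with $\Box_{b} u$ smooth is smooth away from $\Sigma$, so both the singularities of an eigenfunction and the only obstruction to finite rank of $E_{[\epsilon, \Lambda]}$ are confined to $\Sigma$.

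The heart of the matter — and the step I expect to be the main obstacle — is the behavior over $\Sigma$, where $\Box_{b}$ degenerates because the condition $Y(0)$ of Kohn fails. This failure of subellipticity is responsible for the infinite-dimensional kernel $\Ker \delbb$ and, in the non-embeddable case, for the accumulation of $\Spec \Box_{b}$ at $0$ and for possible non-smooth elements of $\Ker \Box_{b}$; this is precisely why the value $0$ must be excluded. Passing to the Heisenberg model along $\Sigma$, the operator becomes a harmonic-oscillator-type operator whose spectrum is a ladder of Landau levels $0 < c \tau < 2 c \tau < \cdots$ over the frequency $\tau$ along the Reeb direction, the bottom level being the model Szegő space. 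For an eigenvalue $\lambda \in [\epsilon, \Lambda]$ the spectral gap $c \tau$ excludes all higher levels once $\tau$ is large, so the microlocal data of the eigenfunction over $\Sigma$ is carried by the bottom level and is governed by the effective Toeplitz-type operator $B$ obtained by compressing $\Box_{b}$ to the model Szegő space, the lower bound $\epsilon > 0$ separating this data from the genuine kernel. The technical core is to show that the spectrum of $B$ in $[\epsilon, \Lambda]$ is finite and its eigenvectors smooth, reconciling the model with the global corrections coming from the Tanaka-Webster curvature and torsion; this is where I would invoke the Heisenberg calculus developed later together with the analyses of~\cite{Burns-Epstein1990-Embed} and~\cite{Hsiao-Marinescu2017}.

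Granting this control over $\Sigma$, the conclusion follows at once. Combining smoothness away from $\Sigma$ with smoothness over $\Sigma$ shows that the range of $E_{[\epsilon, \Lambda]}$ lies in $C^{\infty}(M)$. Moreover the elliptic estimate off $\Sigma$, together with the control of $B$ over $\Sigma$, yields an a priori bound $\lVert u \rVert_{(2)} \leq C_{\epsilon, \Lambda} \lVert u \rVert_{L^{2}}$ for every $u$ in that range, where $\lVert \cdot \rVert_{(2)}$ is the Folland--Stein subelliptic Sobolev norm of order two; since this norm embeds compactly into $L^{2}(M)$ by the Rellich lemma, the $L^{2}$-unit ball of the range of $E_{[\epsilon, \Lambda]}$ is precompact, forcing $E_{[\epsilon, \Lambda]}$ to have finite rank. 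This establishes the reduced claim, and with it the theorem.
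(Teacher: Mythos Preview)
The paper does not give its own proof of this theorem: it is stated as a citation to \cite{Burns-Epstein1990-Embed} and \cite{Hsiao-Marinescu2017} and then used as a black box. So there is nothing in the paper to compare your argument against, and your proposal should be read as an attempt to reconstruct the cited results rather than to reproduce anything the authors wrote.

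On its own terms your outline is pointed in the right direction --- the reduction to showing that $E_{[\epsilon,\Lambda]}$ is finite rank with smooth range is sound, and the picture that $\Box_{b}$ is Heisenberg-elliptic off a single ray $\Sigma$ and degenerates only there is exactly the mechanism behind the result. However, the step you yourself flag as the ``technical core'' is not actually carried out: you describe the Heisenberg model over $\Sigma$, invoke a Landau-level ladder, and then say you would ``invoke the Heisenberg calculus \dots\ together with the analyses of~\cite{Burns-Epstein1990-Embed} and~\cite{Hsiao-Marinescu2017}.'' That is precisely the theorem you are trying to prove, so at that point the argument becomes circular. Concretely, what is missing is a proof that the effective Toeplitz operator $B$ you introduce has discrete spectrum away from $0$ with smooth eigenvectors, and that the error terms coming from passing between the global $\Box_{b}$ and the model do not destroy this; both of these require genuine work (in Burns--Epstein this is done via a careful parametrix construction and analysis of the model problem on the Heisenberg group, and in Hsiao--Marinescu via semiclassical/microlocal methods for Toeplitz operators). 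Your final compactness step also presupposes the a~priori estimate $\lVert u\rVert_{(2)}\le C_{\epsilon,\Lambda}\lVert u\rVert_{L^{2}}$ on $\Ran E_{[\epsilon,\Lambda]}$, but that estimate is equivalent to what you are trying to establish and does not follow from the off-$\Sigma$ ellipticity alone. In short: correct strategy, but the decisive analytic step is asserted rather than proved.
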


It is natural to ask when $0$ is isolated in $\Spec \Box_{b}$,
or equivalently,
$\Box_{b}$ has closed range.
In fact,
this property is equivalent to the embeddability of $(M, T^{1, 0} M)$.

\begin{theorem}[\cite{Burns1979}*{Lemma 1} and \cite{Kohn1986}*{Theorem 5.2}]
\label{thm:Kohn-Laplacian-and-embeddability}
	A three-dimensional closed strictly pseudoconvex CR manifold $(M, T^{1, 0} M)$ is embeddable
	if and only if $\Box_{b}$ has closed range;
	or equivalently,
	$0$ is an isolated point of $\Spec \Box_{b}$.
\end{theorem}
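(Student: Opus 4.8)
The plan is to prove both directions of the equivalence, linking embeddability to the closed-range property of $\Box_{b}$, and then to derive the spectral reformulation as a corollary.

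\begin{proof}[Proof proposal]
The plan is to establish the two directions separately and then observe that the spectral statement is a formal consequence. First I would show that embeddability implies that $\Box_{b}$ has closed range. Suppose $F = (f_{1}, \dots, f_{N}) \colon M \to \bbC^{N}$ is a CR embedding, so that each $f_{i}$ is CR holomorphic, i.e.\ $\delbb f_{i} = 0$. The image $F(M)$ is a closed strictly pseudoconvex hypersurface in $\bbC^{N}$, and one may identify $(M, T^{1, 0} M)$ with this hypersurface. On such an embedded hypersurface, the range of $\Box_{b}$ can be analyzed using the $\overline{\partial}$-Neumann theory and the $\overline{\partial}_{b}$-complex: closedness of the range of $\Box_{b}$ is equivalent to closedness of the range of $\overline{\partial}_{b}$, which follows from subelliptic estimates available precisely because $M$ sits as a compact strictly pseudoconvex boundary (or hypersurface) in a complex manifold. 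This is the content of Kohn's theorem~\cite{Kohn1986}*{Theorem 5.2}, and I would invoke the microlocal Hodge-type decomposition $L^{2}(M) = \Ker \Box_{b} \oplus \overline{\Range \Box_{b}}$ together with the subelliptic estimate to conclude that $\Range \Box_{b}$ is already closed.

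For the converse direction, I would follow Burns~\cite{Burns1979}*{Lemma 1} and argue that the closed-range property produces enough CR functions to embed $M$. The key point is that if $\Box_{b}$ has closed range, then the orthogonal complement of $\Range \Box_{b}$, namely $\Ker \Box_{b}$, is exactly the space of $L^{2}$ CR functions, and by \cref{thm:spectrum-of-Kohn-Laplacian} together with elliptic-type regularity in the Heisenberg sense these are smooth. The strategy is then to show that this space of global CR functions is large enough to separate points and give local coordinates: one constructs, near each point, CR functions with prescribed first-order behavior by solving $\Box_{b} u = g$ (solvable precisely because the range is closed and $g$ can be arranged orthogonal to $\Ker \Box_{b}$) and correcting an approximate CR function to an exact one. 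Assembling finitely many such functions via a compactness argument over the closed manifold $M$ yields the embedding $F \colon M \to \bbC^{N}$.

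Finally, the equivalence between $\Box_{b}$ having closed range and $0$ being isolated in $\Spec \Box_{b}$ is a general fact from the spectral theory of non-negative self-adjoint operators. Since $\Box_{b}$ is non-negative and self-adjoint, $0$ is isolated in its spectrum if and only if there is a spectral gap $\opop{0}{\epsilon}$ disjoint from $\Spec \Box_{b}$, which in turn is equivalent to the estimate $\norm{u} \le C \norm{\Box_{b} u}$ on $(\Ker \Box_{b})^{\perp} \cap \Dom \Box_{b}$; this last inequality is precisely the statement that $\Range \Box_{b}$ is closed. Here I would use \cref{thm:spectrum-of-Kohn-Laplacian} to guarantee that any positive spectrum is discrete, so that the only possible obstruction to a gap is accumulation at $0$.

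I expect the main obstacle to be the converse direction, specifically the passage from the abstract closed-range (equivalently, solvability of $\Box_{b} u = g$) to an actual embedding. Producing globally defined CR functions with controlled local behavior requires careful regularity theory for $\Box_{b}$ and a gluing argument, and this is exactly where the deep analytic input of Kohn's work enters; the two remaining pieces are comparatively formal.
\end{proof}
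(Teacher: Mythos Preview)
The paper does not supply its own proof of this theorem: it is stated purely as a cited result from \cite{Burns1979} and \cite{Kohn1986}, with no argument given in the text. Hence there is nothing in the paper to compare your proposal against, and a full proof is not expected here.

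That said, a remark on your sketch: you have the attributions reversed. Kohn's paper \cite{Kohn1986} is precisely where the implication ``closed range of $\Box_{b}$ $\Rightarrow$ embeddability'' is proved, via the construction of sufficiently many global CR functions that you outline in your second paragraph; this is the deep direction. The implication ``embeddable $\Rightarrow$ closed range'' is the content attributed to Burns (using that an embeddable $M$ bounds a complex-analytic variety, together with the subelliptic estimates for $\overline{\partial}_{b}$ on such boundaries). Your description of the mathematics in each direction is broadly accurate; only the labels are swapped. The spectral reformulation you give at the end is correct and is indeed general functional analysis for non-negative self-adjoint operators, combined with \cref{thm:spectrum-of-Kohn-Laplacian}.
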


\section{CR Paneitz operator}
\label{section:CR-Paneitz-operator}

Let $(M, T^{1, 0} M)$ be a strictly pseudoconvex CR manifold of dimension three
and $\theta$ be a contact form on $M$.
Then the differential operator
\begin{equation}
\label{eq:definition-of-d^c_CR}
	d^{c}_{\CR} \colon C^{\infty}(M) \to \Omega^{1}(M) ; \quad
	u \mapsto \frac{\sqrt{- 1}}{2} \rbra*{u_{\ovone} \theta^{\ovone}
		- u_{1} \theta^{1}} + \frac{1}{2} (\Delta_{b} u) \theta
\end{equation}
is independent of the choice of $\theta$~\cite{Takeuchi2020-Paneitz}*{Lemma 3.1}.
This operator is a CR analog of $d^{c} = (\sqrt{- 1} / 2) (\delb - \partial)$ in complex geometry.
As in complex geometry,
CR pluriharmonic functions are smooth functions annihilated by $d d^{c}_{\CR}$.

\begin{lemma}[\cite{Takeuchi2020-Paneitz}*{Lemma 3.2}]
\label{lem:characterization-of-CR-pluriharmonic}
	For $u \in C^{\infty}(M)$,
	\begin{equation}
	\label{eq:formula-of-dd^c_CR}
		d d^{c}_{\CR} u
		= (P_{1} u) \, \theta \wedge \theta^{1} + (P_{\ovone} u) \, \theta \wedge \theta^{\ovone},
	\end{equation}
	where
	\begin{equation}
		P_{1} u
		= u_{\ovone \ 1}^{\ \ovone} + \sqrt{- 1} A_{1 1} u^{1},
		\qquad
		P_{\ovone} u
		= u_{1 \ \ovone}^{\ 1} - \sqrt{- 1} A_{\ovone \ovone} u^{\ovone}.
	\end{equation}
	In particular,
	u is CR pluriharmonic if and only if $d d^{c}_{\CR} u = 0$.
\end{lemma}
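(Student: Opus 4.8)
The plan is to compute $d\,d^c_{\CR} u$ directly in the admissible coframe and read off its components. First I would apply $d$ to the explicit formula \cref{eq:definition-of-d^c_CR} for $d^c_{\CR} u$. Writing $d^c_{\CR} u = (\sqrt{-1}/2)(u_{\ovone}\theta^{\ovone} - u_1 \theta^1) + (1/2)(\Delta_b u)\theta$, I would differentiate each of the three terms. For the first two terms I would use the product rule $d(u_{\ovone}\theta^{\ovone}) = (du_{\ovone})\wedge\theta^{\ovone} + u_{\ovone}\,d\theta^{\ovone}$, expand $du_{\ovone}$ in the admissible coframe as $u_{\ovone,1}\theta^1 + u_{\ovone,\ovone}\theta^{\ovone} + u_{\ovone,0}\theta$, and substitute the structure equation for $d\theta^{\ovone}$ (the conjugate of the stated $d\theta^1 = \theta^1\wedge\omega_1^{\ 1} + A^1_{\ \ovone}\theta\wedge\theta^{\ovone}$), together with $d\theta = \sqrt{-1}l_{1\ovone}\theta^1\wedge\theta^{\ovone}$.

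The key bookkeeping step is to collect, after all substitutions, only the coefficients of the two-forms $\theta\wedge\theta^1$ and $\theta\wedge\theta^{\ovone}$, since \cref{lem:characterization-of-CR-pluriharmonic} asserts that the $\theta^1\wedge\theta^{\ovone}$ component vanishes. I would verify that the $\theta^1\wedge\theta^{\ovone}$ terms cancel: this is exactly where the fact that $d^c_{\CR}$ is $\theta$-independent (or a direct computation) forces the purely horizontal part of $d\,d^c_{\CR}u$ to drop out, reflecting that $u_{1\ovone} - u_{\ovone 1} = \sqrt{-1}l_{1\ovone}u_0$ from \cref{eq:commutator-of-covariant-derivatives-on-functions}. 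For the $\theta\wedge\theta^{\ovone}$ coefficient I expect to obtain a combination of $u_{1\ \ovone}^{\ 1}$ (from reorganizing $\Delta_b u = -u_{1}^{\ 1} - u_{\ovone}^{\ \ovone}$ and its derivatives) and a torsion term coming from the $A^1_{\ \ovone}\theta\wedge\theta^{\ovone}$ piece of $d\theta^{\ovone}$, which after raising and lowering indices with $l_{1\ovone}$ should assemble into $P_{\ovone}u = u_{1\ \ovone}^{\ 1} - \sqrt{-1}A_{\ovone\ovone}u^{\ovone}$; the $\theta\wedge\theta^1$ coefficient follows by conjugation.

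The main obstacle will be the careful commutation of covariant derivatives needed to bring every term into the canonical index order appearing in $P_1$ and $P_\ovone$. In particular, terms like $u_{\ovone,\ovone}$ with a connection correction, and the mixed derivatives arising from $d(\Delta_b u)$, must be rewritten using \cref{eq:commutator-of-covariant-derivatives-on-functions} to reconcile $u_{1\ \ovone}^{\ 1}$ with $u_{\ovone\ 1}^{\ \ovone}$; the discrepancy between these two orderings is precisely what produces the $T$-derivative and scalar-curvature contributions that must cancel to leave a purely torsion-type correction. Once the two nonzero coefficients are identified with $P_1 u$ and $P_\ovone u$, the final claim is immediate: $u$ is CR pluriharmonic exactly when it is locally the real part of a CR holomorphic function, and unwinding the definition of $\scrP$ together with $d\,d^c_{\CR}u = 0$ yields the stated equivalence, using that $d^c_{\CR}$ is the CR analog of $d^c$ so that $d\,d^c_{\CR}$ plays the role of $i\partial\bar\partial$ whose kernel is the pluriharmonic functions.
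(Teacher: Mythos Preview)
The paper does not prove this lemma; it merely quotes it from \cite{Takeuchi2020-Paneitz}. So there is no ``paper's own proof'' to compare against, and your proposal must stand on its own.

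Your plan for the formula \eqref{eq:formula-of-dd^c_CR} is the standard direct computation and is essentially correct. Expanding $d$ term by term, using the structure equations for $d\theta$, $d\theta^{1}$, $d\theta^{\ovone}$, and expressing ordinary derivatives of the tensor components $u_{1}, u_{\ovone}$ in terms of covariant derivatives plus connection-form corrections (which then cancel against the $\theta^{1}\wedge\omega_{1}^{\ 1}$ pieces of $d\theta^{1}$), one indeed finds that the $\theta^{1}\wedge\theta^{\ovone}$ coefficient vanishes by \eqref{eq:commutator-of-covariant-derivatives-on-functions} and that the remaining coefficients are $P_{1}u$ and $P_{\ovone}u$. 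One minor inaccuracy: the scalar curvature does not in fact appear in this computation. The relevant commutators are those for a \emph{function} in \eqref{eq:commutator-of-covariant-derivatives-on-functions}, which involve only $T$ and the torsion; you never need \eqref{eq:commutator-of-covariant-derivatives-on-tensors1}, so there is no curvature term to cancel.

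The real gap is in the ``in particular'' clause. You assert that the equivalence of $d d^{c}_{\CR} u = 0$ with CR pluriharmonicity is ``immediate'' by analogy with $i\partial\bar\partial$, but that analogy is not a proof. One direction is easy: if $u = \Re f$ with $\delbb f = 0$, a short computation gives $P_{\ovone} u = 0$. The converse, that $P_{1} u = 0$ (equivalently $P_{\ovone} u = 0$) forces $u$ to be locally the real part of a CR holomorphic function, is a genuine local existence statement: one must produce a function $v$ with $\delbb(u + \sqrt{-1}\,v) = 0$. This is Lee's characterization \cite{Lee1988} and requires an argument (in dimension three, one shows that $P_{\ovone}u = 0$ makes a certain first-order system for $v$ locally solvable). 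Your proposal should either cite Lee's result or sketch that local solvability argument; as written, the nontrivial direction is missing.
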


The \emph{CR Paneitz operator} $\Paneitz$ is defined by
\begin{equation}
	\Paneitz u
	\coloneqq \frac{1}{2} \rbra*{(P_{1} u) {}_{,}^{\ 1} + (P_{\ovone} u) {}_{,}^{\ \ovone}}
	= \frac{1}{2} \rbra*{\Box_{b} \overline{\Box}_{b} + \overline{\Box}_{b} \Box_{b}
		+ \calQ + \overline{\calQ}} u,
\end{equation}
which is a real operator.
It follows from \cref{eq:commutator-of-Kohn-Laplacian-and-its-conjugate} that
\begin{equation}
	\Paneitz
	= \overline{\Box}_{b} \Box_{b} + \calQ
	= \Box_{b} \overline{\Box}_{b} + \overline{\calQ}.
\end{equation}
For $u, v \in C^{\infty}_{c}(M)$,
we have
\begin{align}
	\int_{M} d^{c}_{\CR} u \wedge d d^{c}_{\CR} v
	&= \frac{\sqrt{- 1}}{2} \int_{M} \sbra*{u_{\ovone} (P_{1} v) \theta^{\ovone} \wedge \theta \wedge \theta^{1}
		- u_{\ovone} (P_{1} v) \theta^{1} \wedge \theta \wedge \theta^{\ovone}} \\
	&= \frac{1}{2} \int_{M} \sbra*{u^{1} (P_{1} v) + u^{\ovone} (P_{\ovone} v)} \, \theta \wedge d \theta.
\end{align}
Integration by parts yields that
\begin{equation}
\label{eq:cohomological-expression-of-CR-Paneitz}
	\int_{M} d^{c}_{\CR} u \wedge d d^{c}_{\CR} v
	= - \int_{M} u (\Paneitz v) \, \theta \wedge d \theta.
\end{equation}

\begin{proposition}[\cite{Gover-Graham2005}*{Proposition 4.6} or \cite{Chiu2006}*{Proposition 3.2}]
	The CR Paneitz operator $\Paneitz$ is formally self-adjoint.
\end{proposition}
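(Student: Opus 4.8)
The plan is to reduce the assertion to the symmetry of the bilinear pairing $(u, v) \mapsto \int_{M} d^{c}_{\CR} u \wedge d d^{c}_{\CR} v$ on $C^{\infty}_{c}(M)$, and to deduce that symmetry from Stokes' theorem. By definition, formal self-adjointness of $\Paneitz$ is the identity
\begin{equation}
	\int_{M} (\Paneitz u)\, v \, \theta \wedge d\theta = \int_{M} u\, (\Paneitz v) \, \theta \wedge d\theta
\end{equation}
for all $u, v \in C^{\infty}_{c}(M)$. Applying \cref{eq:cohomological-expression-of-CR-Paneitz}, the right-hand side equals $- \int_{M} d^{c}_{\CR} u \wedge d d^{c}_{\CR} v$, while the same formula with the roles of $u$ and $v$ exchanged rewrites the left-hand side as $- \int_{M} d^{c}_{\CR} v \wedge d d^{c}_{\CR} u$. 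Hence it suffices to establish
\begin{equation}
	\int_{M} d^{c}_{\CR} u \wedge d d^{c}_{\CR} v = \int_{M} d^{c}_{\CR} v \wedge d d^{c}_{\CR} u.
\end{equation}

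First I would form the compactly supported two-form $d^{c}_{\CR} u \wedge d^{c}_{\CR} v$ and differentiate it. Since $d^{c}_{\CR} u$ is a one-form, the graded Leibniz rule gives
\begin{equation}
	d \rbra*{ d^{c}_{\CR} u \wedge d^{c}_{\CR} v } = \rbra*{ d d^{c}_{\CR} u } \wedge d^{c}_{\CR} v - d^{c}_{\CR} u \wedge \rbra*{ d d^{c}_{\CR} v }.
\end{equation}
Integrating over $M$ and using Stokes' theorem, which applies because the integrand has compact support and $M$ has no boundary, the left-hand side vanishes and we obtain
\begin{equation}
	\int_{M} \rbra*{ d d^{c}_{\CR} u } \wedge d^{c}_{\CR} v = \int_{M} d^{c}_{\CR} u \wedge \rbra*{ d d^{c}_{\CR} v }.
\end{equation}

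To conclude, I would invoke the graded commutativity of the exterior product: a two-form and a one-form commute, so $\rbra*{ d d^{c}_{\CR} u } \wedge d^{c}_{\CR} v = d^{c}_{\CR} v \wedge \rbra*{ d d^{c}_{\CR} u }$. Substituting this into the preceding identity yields exactly the required symmetry, and therefore the formal self-adjointness of $\Paneitz$. I do not anticipate a genuine obstacle: once \cref{eq:cohomological-expression-of-CR-Paneitz} is available the argument is purely formal, and the only points demanding attention are the sign in the Leibniz rule and the parity in commuting the two forms, both of which are harmless since the degrees involved produce an even sign and $d^{c}_{\CR}$ carries real functions to real one-forms.
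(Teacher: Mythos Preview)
Your proposal is correct and follows essentially the same route as the paper: both reduce formal self-adjointness via \cref{eq:cohomological-expression-of-CR-Paneitz} to the symmetry of $\int_{M} d^{c}_{\CR} u \wedge d d^{c}_{\CR} v$, and obtain that symmetry by applying Stokes' theorem to $d(d^{c}_{\CR} u \wedge d^{c}_{\CR} v)$. The only cosmetic difference is that the paper first restricts to real-valued $u,v$ (invoking that $\Paneitz$ is real), whereas you work directly with the bilinear pairing and track the signs from the graded Leibniz rule and graded commutativity explicitly.
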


\begin{proof}
	First note that $\Paneitz$ is a real operator,
	and so it suffices to consider real-valued functions.
	Let $u, v \in C^{\infty}_{c}(M)$.
	Then \cref{eq:cohomological-expression-of-CR-Paneitz} implies that
	\begin{align}
		\int_{M} u (\Paneitz v) \, \theta \wedge d \theta
		&= - \int_{M} d^{c}_{\CR} u \wedge d d^{c}_{\CR} v \\
		&= \int_{M} d \rbra*{d^{c}_{\CR} u \wedge d^{c}_{\CR} v}
			- \int_{M} d d^{c}_{\CR} u \wedge d^{c}_{\CR} v \\
		&= - \int_{M} d^{c}_{\CR} v \wedge d d^{c}_{\CR} u \\
		&= \int_{M} v (\Paneitz u) \, \theta \wedge d \theta,
	\end{align}
	which means that $\Paneitz$ is formally self-adjoint.
\end{proof}

The CR Paneitz operator is said to be \emph{non-negative}
if $\int_{M} u (\Paneitz u) \, \theta \wedge d \theta \geq 0$
for any real-valued $u \in C^{\infty}_{c}(M)$.
\cref{eq:cohomological-expression-of-CR-Paneitz} implies that
this condition is equivalent to
\begin{equation}
	\int_{M} d^{c}_{\CR} u \wedge d d^{c}_{\CR} u \leq 0,
\end{equation}
which is independent of the choice of $\theta$.
This fact also follows from the transformation law of $\Paneitz$ under conformal change.

\begin{lemma}[\cite{Hirachi1993}*{Lemma 7.4}]
	Under the conformal change $\whxth = e^{\Upsilon} \theta$,
	one has $\whP = e^{- 2 \Upsilon} \Paneitz$,
	where $\whP$ is defined in terms of $\whxth$.
\end{lemma}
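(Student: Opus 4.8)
The plan is to derive the transformation law directly from the cohomological expression \cref{eq:cohomological-expression-of-CR-Paneitz}, exploiting the fact that $d^{c}_{\CR}$ is independent of the choice of contact form (\cite{Takeuchi2020-Paneitz}*{Lemma 3.1}). The key observation is that, because $d^{c}_{\CR}$ is conformally invariant and $d$ is the ordinary exterior derivative, the three-form $d^{c}_{\CR} u \wedge d d^{c}_{\CR} v$ is constructed with no reference whatsoever to $\theta$. Consequently the left-hand side of \cref{eq:cohomological-expression-of-CR-Paneitz} is literally the same object whether it is computed with respect to $\theta$ or with respect to $\whxth$, even though the operator $\Paneitz$ and the volume form $\theta \wedge d\theta$ on the right-hand side both depend on the chosen contact form. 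All of the work is then pushed onto comparing the two right-hand sides.

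First I would record how the natural volume form scales. Writing $\whxth = e^{\Upsilon}\theta$, we have $d\whxth = e^{\Upsilon}\, d\Upsilon \wedge \theta + e^{\Upsilon}\, d\theta$, and since $\theta \wedge d\Upsilon \wedge \theta = 0$ this gives
\[
	\whxth \wedge d\whxth = e^{2\Upsilon}\, \theta \wedge d\theta.
\]
Next I would apply \cref{eq:cohomological-expression-of-CR-Paneitz} once for each contact form. As $M$ is closed we may take $u, v \in C^{\infty}(M)$, and since the left-hand side agrees for the two choices we obtain
\[
	\int_{M} u (\Paneitz v)\, \theta \wedge d\theta
	= \int_{M} u (\whP v)\, \whxth \wedge d\whxth
	= \int_{M} u (\whP v)\, e^{2\Upsilon}\, \theta \wedge d\theta
\]
for all $u, v \in C^{\infty}(M)$. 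Letting $u$ range over all smooth functions, the fundamental lemma of the calculus of variations forces the integrands to coincide pointwise, so $\Paneitz v = e^{2\Upsilon}\, \whP v$ for every $v$, which is precisely $\whP = e^{-2\Upsilon}\Paneitz$.

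The argument is essentially formal once the conformal invariance of $d^{c}_{\CR}$ is granted, so the only real obstacle is that invariance itself, which here we take as given from \cite{Takeuchi2020-Paneitz}*{Lemma 3.1}. The remaining ingredients—the $e^{2\Upsilon}$ scaling of $\theta \wedge d\theta$ and the passage from an integral identity valid for all test functions to a pointwise one—are routine. I would also note that this proof simultaneously re-explains why non-negativity of $\Paneitz$ is conformally invariant, since $e^{-2\Upsilon}$ is positive; indeed the integral $\int_{M} d^{c}_{\CR} u \wedge d d^{c}_{\CR} u$ is manifestly $\theta$-independent, matching the observation made just before the statement.
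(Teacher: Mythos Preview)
Your proof is correct and follows essentially the same approach as the paper: both use the conformal invariance of $d^{c}_{\CR}$ together with the identity \cref{eq:cohomological-expression-of-CR-Paneitz} applied to each contact form, then compare volume forms via $\whxth \wedge d\whxth = e^{2\Upsilon}\,\theta \wedge d\theta$ to conclude. Your write-up is slightly more explicit about the volume-form computation, but the argument is otherwise the same.
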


\begin{proof}
	Since $d^{c}_{\CR}$ is independent of the choice of a contact form,
	we have
	\begin{align}
		\int_{M} u (\Paneitz v) \, \theta \wedge d \theta
		= - \int_{M} d^{c}_{\CR} u \wedge d d^{c}_{\CR} v
		&= \int_{M} u (\whP v) \, \whxth \wedge d \whxth \\
		&= \int_{M} u (e^{2 \Upsilon} \whP v) \, \theta \wedge d \theta
	\end{align}
	for any $u, v \in C^{\infty}_{c}(M)$.
	This yields that $\whP = e^{- 2 \Upsilon} \Paneitz$
\end{proof}

\section{Non-negativity of the CR Paneitz operator and embeddability}
\label{section:non-negativity-of-the-CR-Paneitz-operator-and-embeddability}

In this section,
we discuss relations between the non-negativity of the CR Paneitz operator
and the embeddability of a CR manifold.
We first consider the CR Paneitz operator on an embeddable CR manifold.

\begin{theorem}[\cite{Takeuchi2020-Paneitz}*{Theorem 1.1}]
\label{thm:non-negativity-of-CR-Paneitz-operator}
	Let $(M, T^{1, 0} M)$ be a closed embeddable strictly pseudoconvex CR manifold of dimension three.
	Then the CR Paneitz operator is non-negative,
	and its kernel consists of CR pluriharmonic functions.
\end{theorem}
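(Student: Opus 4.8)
The plan is to work entirely with the quadratic form attached to $\Paneitz$ and to bring in embeddability through the closed range of $\Box_{b}$. Since $M$ is closed, every smooth function is an admissible test function, and because $\Paneitz$ is real and formally self-adjoint it suffices to treat real-valued $u \in C^{\infty}(M)$. By \cref{eq:cohomological-expression-of-CR-Paneitz} the relevant quantity is
\[
	\int_{M} u\,(\Paneitz u)\,\theta\wedge d\theta = -\int_{M} d^{c}_{\CR}u\wedge d\,d^{c}_{\CR}u,
\]
and by \cref{lem:characterization-of-CR-pluriharmonic} the right-hand integrand vanishes exactly when $u\in\scrP$. Thus both assertions are captured by a single statement: the form is non-negative, and it vanishes precisely when $d\,d^{c}_{\CR}u=0$. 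Once non-negativity is known, $\Paneitz$ (as a self-adjoint operator on $L^{2}(M)$) has $L^{2}$-kernel equal to the null space of the form, so the kernel characterization will follow from an equality analysis.

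First I would expand the form using the factorization $\Paneitz=\overline{\Box}_{b}\Box_{b}+\calQ$ together with \cref{eq:complex-conjugate-of-Kohn-Laplacian}, writing $\overline{\Box}_{b}=\Box_{b}-\sqrt{-1}\,T$. This produces a leading contribution $\lVert\Box_{b}u\rVert^{2}$, a term coming from $T\Box_{b}$, and a torsion term built from $\calQ+\overline{\calQ}$. The essential difficulty is already visible here: in the Heisenberg scaling the Reeb field $T$ has the same order as $\Box_{b}$, so the $T$-contribution is genuinely of the same order as $\lVert\Box_{b}u\rVert^{2}$ and carries an indefinite sign (concretely a term $-\tfrac{1}{4}\int_{M}(Tu)^{2}$), and the torsion term is sign-indefinite as well. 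Hence the form is \emph{not} non-negative as a formal or pointwise identity—as the Rossi sphere will later show—and a global hypothesis is unavoidable.

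This is where embeddability enters, via \cref{thm:Kohn-Laplacian-and-embeddability}: it is equivalent to $\Box_{b}$ having closed range, so $0$ is isolated in $\Spec\Box_{b}$ and $L^{2}(M)=\Ker\Box_{b}\oplus\operatorname{Range}\Box_{b}$ orthogonally, with $\Ker\Box_{b}$ the space of smooth $L^{2}$ CR functions and a bounded Green operator inverting $\Box_{b}$ on its range with a uniform spectral gap $\Spec\Box_{b}\cap(0,\infty)\subset[\delta,\infty)$. The plan is to use this decomposition, together with $\delbb f=0$ on $\Ker\Box_{b}$ and the gap, to rewrite the indefinite same-order and torsion contributions as a controlled $\delbb$-energy and absorb them into $\lVert\Box_{b}u\rVert^{2}$, yielding non-negativity; the kernel statement then follows by tracing the equalities, since vanishing of the form forces the controlled $\delbb$-derivative, hence $P_{1}u$ and $P_{\ovone}u$, to vanish, i.e.\ $d\,d^{c}_{\CR}u=0$. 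The main obstacle is precisely this quantitative step: converting the soft, functional-analytic closed-range property into a concrete coercive estimate that dominates the indefinite $-\tfrac{1}{4}\int_{M}(Tu)^{2}$ term and the torsion term. Because the Heisenberg principal symbol of $\Paneitz$ is itself indefinite, no local integration by parts can succeed, so the gap $\delta>0$ must be used in an essential way, and controlling the torsion contribution uniformly is the delicate point.
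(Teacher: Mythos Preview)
Your approach differs fundamentally from the paper's, and the step you flag as ``the delicate point'' is in fact an open problem, not a gap you can expect to close by the method you outline. The paper does not argue via the spectral decomposition of $\Box_{b}$ at all; it uses embeddability to \emph{fill} $M$ by a complex manifold. Concretely, Harvey--Lawson and Lempert realize $(M,T^{1,0}M)$ as the boundary of a strictly pseudoconvex domain $\Omega$ in a complex projective surface, one equips $\Omega$ with an asymptotically complex hyperbolic \Kahler\ form $\omega_{+}$, and for each real $u\in C^{\infty}(M)$ one takes the $\Box_{+}$-harmonic extension $\wtu$ into $\Omega$ (Epstein--Melrose--Mendoza). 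The boundary traces satisfy $d^{c}\wtu|_{M}=d^{c}_{\CR}u$ and $dd^{c}\wtu|_{M}=dd^{c}_{\CR}u$, while harmonicity $dd^{c}\wtu\wedge\omega_{+}=0$ forces the pointwise inequality $dd^{c}\wtu\wedge dd^{c}\wtu\le 0$ on $(2,2)$-forms. Stokes' theorem then gives
\[
0\ \ge\ \int_{\Omega} dd^{c}\wtu\wedge dd^{c}\wtu
\ =\ \int_{M} d^{c}_{\CR}u\wedge dd^{c}_{\CR}u
\ =\ -\int_{M} u\,(\Paneitz u)\,\theta\wedge d\theta,
\]
which is the non-negativity; equality forces $\wtu$ pluriharmonic, hence $dd^{c}_{\CR}u=0$ and $u\in\scrP$.

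Your proposal, by contrast, attempts to extract $\Paneitz\ge 0$ directly from the closed-range property of $\Box_{b}$. You correctly isolate the obstruction: the Reeb contribution and the torsion term are of the same Heisenberg order as $\lVert\Box_{b}u\rVert^{2}$, so no local identity can work, and one would need a quantitative mechanism linking the spectral gap $\delta>0$ to the size of $A_{11}$ and of the $T$-term. But you do not supply such a mechanism---you only assert that the plan is to ``absorb'' the bad terms---and none is currently known. The paper states this explicitly as an open problem immediately after the theorem: ``Can we obtain the non-negativity of the CR Paneitz operator from the fact that $\Box_{b}$ has closed range?'' There is no a priori relation between the size of the gap (which depends on the global CR structure) and the pointwise size of the torsion, so the absorption step is not a routine estimate waiting to be written down; as presented, your argument stops precisely at the point where the actual content would have to begin.
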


\begin{proof}[Sketch of proof]
	Without loss of generality,
	we may assume that $M$ is connected.
	It follows from \cite{Harvey-Lawson1975}*{Theorem 10.4} that
	$(M, T^{1, 0} M)$ bounds a two-dimensional strictly pseudoconvex Stein space.
	Lempert~\cite{Lempert1995}*{Theorem 8.1} has proved that,
	in this case,
	$(M, T^{1, 0} M)$ can be realized as the boundary of a strictly pseudoconvex domain $\Omega$
	in a two-dimensional complex projective manifold $X$.
	Take an \emph{asymptotically complex hyperbolic} \Kahler form $\omega_{+}$ on $\Omega$;
	that is,
	a \Kahler form on $\Omega$ that approaches the complex hyperbolic \Kahler form near the boundary.
	Denote by $\Box_{+}$ the $\delb$-Laplacian on $\Omega$ with respect to $\omega_{+}$.

	Fix a real-valued $u \in C^{\infty}(\bdry \Omega)$.
	We obtain from \cite{Epstein-Melrose-Mendoza1991} that
	there exists a $\Box_{+}$-harmonic extension $\wtu \in C^{\infty}(\Omega)$ of $u$.
	This $\wtu$ satisfies
	\begin{equation}
		d^{c} \wtu|_{\bdry \Omega}
		= d^{c}_{\CR} u,
		\qquad
		d d^{c} \wtu|_{\bdry \Omega}
		= d d^{c}_{\CR} u.
	\end{equation}
	Moreover,
	$d d^{c} \wtu \wedge \omega_{+} = (\Box_{+} \wtu / 2) \omega_{+}^{2} = 0$ yields that
	\begin{equation}
		d d^{c} \wtu \wedge d d^{c} \wtu
		\leq 0
	\end{equation}
	with equality if and only if $\wtu$ is pluriharmonic.
	It follows from Stokes' theorem and \cref{eq:cohomological-expression-of-CR-Paneitz} that
	\begin{align}
		0
		\geq \int_{\Omega} d d^{c} \wtu \wedge d d^{c} \wtu
		= \int_{\Omega} d \rbra*{d^{c} \wtu \wedge d d^{c} \wtu}
		&= \int_{\bdry \Omega} d^{c} \wtu \wedge d d^{c} \wtu \\
		&= \int_{\bdry \Omega} d^{c}_{\CR} u \wedge d d^{c}_{\CR} u \\
		&= - \int_{\bdry \Omega} u (\Paneitz u) \, \theta \wedge d \theta.
	\end{align}
	This means that $\Paneitz$ is non-negative.
	If $\Paneitz u = 0$,
	then $d d^{c}_{\CR} u = d d^{c} \wtu|_{\bdry \Omega} = 0$,
	and so $u \in \scrP$ according to \cref{lem:characterization-of-CR-pluriharmonic}.
\end{proof}

See \cite{Takeuchi2020-Paneitz} for some applications of this theorem;
e.g., the CR Yamabe problem and the logarithmic singularity of the \Szego kernel.
In the proof of \cref{thm:non-negativity-of-CR-Paneitz-operator},
we use the fact that an embeddable CR manifold can be filled by a \Kahler manifold.
On the other hand,
\cref{thm:Kohn-Laplacian-and-embeddability} states that
the embeddability is equivalent to the closedness of $\Ran \Box_{b}$,
which is a spectral property of $\Box_{b}$.
It is natural to ask whether we can derive $\Paneitz \geq 0$
directly from the fact that $\Ran \Box_{b}$ is closed.

\begin{problem}
	Can we obtain the non-negativity of the CR Paneitz operator
	from the fact that $\Box_{b}$ has closed range?
\end{problem}

As we will see in \cref{section:Rossi-sphere},
the CR Paneitz operator on a non-embeddable CR manifold may have a negative eigenvalue.
However,
it is not known whether its spectrum is bounded from below.

\begin{problem}
	Is the spectrum $\Spec \Paneitz$ of $\Paneitz$ bounded from below
	even if $(M, T^{1, 0} M)$ is non-embeddable?
\end{problem}

Conversely,
the non-negativity of the CR Paneitz operator together with positive scalar curvature
implies the embeddablity.
The proof is based on the Bochner type formula
for the Kohn Laplacian~\cite{Chanillo-Chiu-Yang2012}*{Proposition 2.1}.

\begin{theorem}[\cite{Chanillo-Chiu-Yang2012}*{Theorem 1.4}]
	Let $(M, T^{1, 0} M, \theta)$ be a closed pseudo-Hermitian manifold of dimension three
	such that $\Paneitz$ is non-negative and $\Scal > 0$.
	Then $(M, T^{1, 0} M)$ is embeddable.
\end{theorem}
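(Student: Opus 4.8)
The plan is to deduce embeddability from \cref{thm:Kohn-Laplacian-and-embeddability} by showing that $0$ is an isolated point of $\Spec \Box_{b}$. Since $\Box_{b}$ is non-negative and self-adjoint and its positive spectrum is discrete by \cref{thm:spectrum-of-Kohn-Laplacian}, this is equivalent to a Poincaré-type estimate $\| \Box_{b} u \| \geq c \| u \|$ holding for some $c > 0$ and all $u$ orthogonal to $\Ker \Box_{b}$. I would argue by contradiction. After passing to connected components I may assume $M$ is connected, so that the constants lie in $\Ker \Box_{b}$. If no such $c$ existed, then $0$ would fail to be isolated, and I could select positive eigenvalues $\lambda_{j} \downarrow 0$ with $L^{2}$-normalized eigenfunctions $u_{j} \in C^{\infty}(M)$ satisfying $\Box_{b} u_{j} = \lambda_{j} u_{j}$ and $u_{j} \perp \Ker \Box_{b}$; the aim is to rule these out.

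The key input is the Bochner-type identity for the Kohn Laplacian from \cite{Chanillo-Chiu-Yang2012}*{Proposition 2.1}. For a real-valued $u$ it expresses a second-order Hessian term, a scalar-curvature term, and the Paneitz pairing through $\| \Box_{b} u \|^{2}$ and first-order quantities; schematically, with every integral taken against $\theta \wedge d \theta$,
\begin{equation*}
	\int_{M} |u_{\ovone \ovone}|^{2}
	+ \int_{M} \Scal \, |u_{1}|^{2}
	+ \int_{M} u (\Paneitz u)
	= \| \Box_{b} u \|^{2} + R(u),
\end{equation*}
where $R(u)$ is controlled by $\langle \Box_{b} u, u \rangle = \int_{M} |u_{\ovone}|^{2}$. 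I would establish it by repeated integration by parts, commuting covariant derivatives with \cref{eq:commutator-of-covariant-derivatives-on-functions} and invoking \cref{eq:complex-conjugate-of-Kohn-Laplacian}; the curvature term appears precisely when derivatives of the $(1, 0)$-form $u_{1} \theta^{1}$ are commuted, cf. \cref{eq:curvature-form-of-TW-connection}. The delicate part, and the main obstacle, is to pin down the signs of the curvature and Paneitz contributions so that both hypotheses push in the same direction. (For the complex eigenfunctions $u_{j}$ one applies the identity to the real and imaginary parts separately; note that $\int_{M} u (\Paneitz u) \geq 0$ persists for complex $u$ since $\Paneitz$ is real and self-adjoint.)

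Granting the identity, I discard the non-negative Hessian term and use $\Paneitz \geq 0$ to drop the Paneitz pairing, obtaining
\begin{equation*}
	\int_{M} \Scal \, |u_{1}|^{2} \leq \| \Box_{b} u \|^{2} + |R(u)|.
\end{equation*}
Since $M$ is compact, $\Scal_{0} \coloneqq \min_{M} \Scal > 0$. Applying this with $u = u_{j}$, the right-hand side equals $\lambda_{j}^{2} + O(\lambda_{j}) \to 0$, so $\int_{M} |u_{1}|^{2} \to 0$ along these eigenfunctions. Combined with $\int_{M} |u_{\ovone}|^{2} = \langle \Box_{b} u_{j}, u_{j} \rangle = \lambda_{j} \to 0$, the full horizontal energy of $u_{j}$ tends to $0$. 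Finally I would invoke the subelliptic Poincaré inequality on the connected compact manifold $M$: the horizontal gradient controls $u_{j}$ modulo its mean value $\bar{u}_{j}$, so $\| u_{j} - \bar{u}_{j} \| \to 0$. But $u_{j} \perp \Ker \Box_{b} \ni 1$ forces $\bar{u}_{j} = 0$, whence $\| u_{j} \| \to 0$, contradicting $\| u_{j} \| = 1$. This contradiction shows that $0$ is isolated in $\Spec \Box_{b}$, so $\Box_{b}$ has closed range and $(M, T^{1, 0} M)$ is embeddable by \cref{thm:Kohn-Laplacian-and-embeddability}.
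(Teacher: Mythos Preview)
Your approach is correct and rests on the same Bochner-type identity the paper invokes, but the execution is more circuitous than necessary. In the paper the identity is applied directly to a single eigenfunction $\varphi$ of $\Box_{b}$ with eigenvalue $\lambda>0$: starting from $\int (\varphi_{\ovone}\overline{\varphi}^{\ovone})_{\ovone}{}^{\ovone}=0$, one integrates by parts and commutes derivatives to obtain
\[
\int_{M}\abs{\varphi_{\ovone\ovone}}^{2}+\int_{M}\Scal\,\abs{\varphi_{\ovone}}^{2}+\int_{M}(\Paneitz\varphi)\,\overline{\varphi}=2\lambda^{2}\norm{\varphi}_{L^{2}}^{2},
\]
from which $\Paneitz\geq 0$ and $\Scal\geq C$ immediately give $C\lambda\norm{\varphi}_{L^{2}}^{2}\leq 2\lambda^{2}\norm{\varphi}_{L^{2}}^{2}$, hence the explicit spectral gap $\lambda\geq C/2$. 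Your contradiction argument, the limiting sequence, and the appeal to a subelliptic Poincar\'e inequality are therefore all avoidable: once the Bochner identity is written for an eigenfunction, the quantitative gap drops out in one line. Two small points to tidy: the curvature term in the identity involves $\abs{\varphi_{\ovone}}^{2}$ rather than $\abs{\varphi_{1}}^{2}$ (though you eventually control both), and for complex $\varphi$ the Paneitz pairing should be written as $\int(\Paneitz\varphi)\overline{\varphi}=\langle \Paneitz\varphi,\varphi\rangle\geq 0$ rather than $\int u(\Paneitz u)$; this already works directly without splitting into real and imaginary parts.
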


\begin{proof}
	Take a constant $C > 0$ such that $\Scal \geq C$.
	According to \cref{thm:spectrum-of-Kohn-Laplacian,thm:Kohn-Laplacian-and-embeddability},
	it suffices to show that any non-zero eigenvalue $\lambda$ of $\Box_{b}$
	satisfies $\lambda \geq C / 2$.
	Let $\varphi$ be an eigenfunction of $\Box_{b}$ corresponding to $\lambda$;
	note that $\varphi \in C^{\infty}(M)$.
	Integration by parts gives that
	\begin{equation}
	\label{eq:eigenfunction-of-Kohn-Laplacian}
		\int_{M} \varphi_{\ovone} \overline{\varphi}^{\ovone} \, \theta \wedge d \theta
		= - \int_{M} \varphi_{\ovone}^{\ \ovone} \cdot \overline{\varphi} \, \theta \wedge d \theta
		= \lambda \norm{\varphi}_{L^{2}}^{2}.
	\end{equation}
	We obtain from \cref{eq:commutator-of-covariant-derivatives-on-functions} that
	\begin{align}
		P_{\ovone} \varphi
		= \varphi_{1 \ \ovone}^{\ 1} - \sqrt{- 1} A_{\ovone \ovone} \varphi^{\ovone}
		&= \varphi_{\ovone \ \ovone}^{\ \ovone}
			+ \sqrt{- 1} \rbra*{\varphi_{0 \ovone} - A_{\ovone \ovone} \varphi^{\ovone}} \\
		&= - \lambda \varphi_{\ovone} + \sqrt{- 1} \varphi_{\ovone 0}.
	\end{align}
	This and \cref{eq:commutator-of-covariant-derivatives-on-tensors1} yield that
	\begin{equation}
		\varphi_{\ovone \ovone}^{\ \ \, \ovone}
		= \varphi_{\ovone \ \ovone}^{\ \ovone} - \sqrt{- 1} \varphi_{\ovone 0} + \Scal \cdot \varphi_{\ovone}
		= - 2 \lambda \varphi_{\ovone} - P_{\ovone} \varphi + \Scal \cdot \varphi_{\ovone}.
	\end{equation}
	Hence we have
	\begin{align}
		(\varphi_{\ovone} \overline{\varphi}^{\ovone}) {}_{\ovone}^{\ \, \ovone}
		&= {\varphi_{\ovone \ovone}}^{\ovone} \overline{\varphi}^{\ovone}
			+ \varphi_{\ovone \ovone} \overline{\varphi}^{\ovone \ovone}
			+ \varphi_{\ovone}^{\ \ovone} \overline{\varphi}^{\ovone}_{\ \ovone}
			+ \varphi_{\ovone} \overline{\varphi}^{\ovone \ \ovone}_{\ \ovone} \\
		&= \varphi_{\ovone \ovone} \overline{\varphi}^{\ovone \ovone}
			+ \lambda^{2} \abs{\varphi}^{2} - 3 \lambda \varphi_{\ovone} \overline{\varphi}^{\ovone}
			- (P_{\ovone} \varphi) \overline{\varphi}^{\ovone}
			+ \Scal \cdot \varphi_{\ovone} \overline{\varphi}^{\ovone}.
	\end{align}
	Integration by parts and \cref{eq:eigenfunction-of-Kohn-Laplacian} imply that
	\begin{align}
		0
		&= \int_{M} \sbra*{\varphi_{\ovone \ovone} \overline{\varphi}^{\ovone \ovone}
			+ \lambda^{2} \abs{\varphi}^{2} - 3 \lambda \varphi_{\ovone} \overline{\varphi}^{\ovone}
			- (P_{\ovone} \varphi) \overline{\varphi}^{\ovone}
			+ \Scal \cdot \varphi_{\ovone} \overline{\varphi}^{\ovone}} \, \theta \wedge d \theta \\
		&\geq - 2 \lambda^{2} \norm{\varphi}_{L^{2}}^{2}
			+ \int_{M} (P \varphi) \overline{\varphi} \, \theta \wedge d \theta
			+ C \int_{M} \varphi_{\ovone} \overline{\varphi}^{\ovone} \, \theta \wedge d \theta \\
		&\geq - 2 \lambda^{2} \norm{\varphi}_{L^{2}}^{2}
			+ \lambda C \norm{\varphi}_{L^{2}}^{2} \\
		&= \lambda (C - 2 \lambda) \norm{\varphi}_{L^{2}}^{2}.
	\end{align}
	Since $\lambda > 0$ and $\norm{\varphi}_{L^{2}}^{2} > 0$,
	we have $\lambda \geq C / 2$.
\end{proof}

It is not known whether the additional assumption $\Scal > 0$
is necessary in the above theorem.

\begin{problem}
	Does there exist a non-embeddable pseudo-Hermitian manifold
	such that $\Paneitz \geq 0$ and $\Scal \leq 0$?
\end{problem}

\section{Heisenberg calculus}
\label{section:Heisenberg-calculus}

In this section,
we recall basic properties of Heisenberg pseudodifferential operators;
see~\cites{Beals-Greiner1988,Ponge2008-Book}
for a comprehensive introduction to the Heisenberg calculus.
Throughout this section,
we fix a closed pseudo-Hermitian manifold $(M, T^{1, 0} M, \theta)$ of dimension three.
Set
\begin{equation}
	\frakg M
	\coloneqq (T M / H M) \oplus H M,
\end{equation}
which is a bundle of two-step nilpotent Lie algebras.
The dilation $\dilation_{r}$ on $\frakg M$ for $r > 0$ is defined by
\begin{equation}
	\dilation_{r} |_{T M / H M}
	\coloneqq r^{2},
	\qquad
	\dilation_{r} |_{H M}
	\coloneqq r.
\end{equation}
For $m \in \bbZ$,
the space $\Hsymbol^{m}(M)$
consists of functions in $C^{\infty}((\frakg M)^{\ast} \setminus \{0\})$
that are homogeneous of degree $m$ on each fiber.
This space has a bilinear product
\begin{equation}
	\ast \colon \Hsymbol^{m_{1}}(M) \times \Hsymbol^{m_{2}}(M)
	\to \Hsymbol^{m_{1} + m_{2}}(M).
\end{equation}

For $m \in \bbZ$,
denote by $\Hpsido^{m}(M)$
the space of \emph{Heisenberg pseudodifferential operators
$A \colon C^{\infty}(M) \to C^{\infty}(M)$ of order $m$}.
Roughly speaking,
a differential operator is of order $m$
if it can be written locally as a linear combination of operators
obtained by composing at most $m$ sections of $H M$.
In particular,
the Reeb vector field $T$ is not of order $1$ but of order $2$
although it is a vector field.
The space $\Hpsido^{m}(M)$ is closed under complex conjugate, transpose, and formal adjoint%
~\cite{Ponge2008-Book}*{Proposition 3.1.23}.
In particular,
any $A \in \Hpsido^{m}(M)$ extends to a linear operator
\begin{equation}
	A \colon \scrD^{\prime}(M) \to \scrD^{\prime}(M),
\end{equation}
where $\scrD^{\prime}(M)$ is the space of distributions on $M$.
Moreover,
$\bigcup_{m \in \bbZ} \Hpsido^{m}(M)$ defines a filtered algebra
and $\Hpsido^{- \infty}(M) \coloneqq \bigcap_{m \in \bbZ} \Hpsido^{m}(M)$
coincides with the space of smoothing operators on $M$.
As in the usual pseudodifferential calculus,
there exists the Heisenberg principal symbol,
which has some good properties:

\begin{proposition}[\cite{Ponge2008-Book}*{Propositions 3.2.6 and 3.2.9}]
\label{prop:Heisenberg-principal-symbol}
	(i) The Heisenberg principal symbol $\sigma_{m}$ gives the following exact sequence:
	\begin{equation}
		0 \to \Hpsido^{m - 1}(M) \hookrightarrow \Hpsido^{m}(M)
			\xrightarrow{\sigma_{m}} \Hsymbol^{m}(M) \to 0.
	\end{equation}

	(ii) For $A_{1} \in \Hpsido^{m_{1}}(M)$ and $A_{2} \in \Hpsido^{m_{2}}(M)$,
	one has $\sigma_{m_{1} + m_{2}}(A_{1} A_{2}) = \sigma_{m_{1}}(A_{1}) \ast \sigma_{m_{2}}(A_{2})$.
\end{proposition}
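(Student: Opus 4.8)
The plan is to treat the two assertions separately, reducing each to a local analysis in privileged (Heisenberg) coordinates adapted to the contact structure. For (i), the inclusion $\Hpsido^{m-1}(M) \hookrightarrow \Hpsido^{m}(M)$ is injective simply because the filtration is increasing, so the genuine content is exactness in the middle together with surjectivity of $\sigma_{m}$. The key structural input is that, in a Heisenberg chart, any $A \in \Hpsido^{m}(M)$ is the quantization of a classical Heisenberg symbol $p$ admitting an asymptotic expansion $p \sim \sum_{j \geq 0} p_{m - j}$ into terms $p_{m - j}$ that are homogeneous of degree $m - j$ with respect to the dilations $\dilation_{r}$. The principal symbol $\sigma_{m}(A)$ is, intrinsically, the leading term $p_{m}$ regarded as an element of $\Hsymbol^{m}(M)$; one must of course check that $p_{m}$ transforms correctly under coordinate changes so that it patches into a well-defined section over $(\frakg M)^{\ast} \setminus \{0\}$.

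With this local description in hand, exactness in the middle is almost tautological: if $\sigma_{m}(A) = 0$ then the local symbol of $A$ begins in degree $m - 1$, so $A \in \Hpsido^{m - 1}(M)$, while conversely an operator of order $m - 1$ has no term of degree $m$ and hence vanishing principal symbol. For surjectivity, given $p \in \Hsymbol^{m}(M)$ I would choose a finite cover of $M$ by Heisenberg charts with a subordinate partition of unity, quantize $p$ in each chart using the model left-invariant quantization on the tangent group, and sum the pieces. The resulting operator lies in $\Hpsido^{m}(M)$ and has principal symbol $p$, since the ambiguities introduced by the partition of unity and by the coordinate changes are of lower order and therefore invisible to $\sigma_{m}$.

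For (ii), the scale $\bigcup_{m} \Hpsido^{m}(M)$ is already known to be a filtered algebra, so $A_{1} A_{2} \in \Hpsido^{m_{1} + m_{2}}(M)$ and the only issue is to identify its principal symbol. Here the geometry of the tangent group enters decisively. The product $\ast$ is defined fiberwise over $x \in M$ as the symbol of the composition of two left-invariant convolution operators on the simply connected nilpotent group $G_{x} M$ whose Lie algebra is $\frakg_{x} M$; under the group Fourier transform this is a genuinely noncommutative, twisted product encoding the two-step structure. The identity $\sigma_{m_{1} + m_{2}}(A_{1} A_{2}) = \sigma_{m_{1}}(A_{1}) \ast \sigma_{m_{2}}(A_{2})$ then follows by freezing coefficients at each point: the leading behavior of $A_{1} A_{2}$ is governed by the composition of the associated model operators on $G_{x} M$, which is exactly the $\ast$-product of the two principal symbols.

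The hardest part is the local analysis underlying both the surjectivity in (i) and the composition formula in (ii). One must verify that the operator-level composition reduces, to leading order, to convolution on the tangent group $G_{x} M$, which requires controlling the remainder terms in the asymptotic expansion of the composite symbol and confirming that the noncommutativity of $G_{x} M$ is faithfully reproduced by the $\ast$-product. Tracking these lower-order remainders through the dilation-homogeneous expansion, and checking that they indeed lie in $\Hpsido^{m_{1} + m_{2} - 1}(M)$, is where the real technical work concentrates; the remainder of the argument is bookkeeping with the filtration and the homogeneity degrees.
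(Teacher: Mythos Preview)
The paper does not give its own proof of this proposition: it is stated with a citation to Ponge's monograph (Propositions~3.2.6 and~3.2.9 there) and used as a black box in the survey. There is therefore nothing in the paper to compare your argument against.

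That said, your sketch is a faithful outline of how the result is established in the cited reference (and in Beals--Greiner): the principal symbol is defined via the leading homogeneous term in privileged coordinates, invariance under changes of Heisenberg chart makes it a global object on $(\frakg M)^{\ast}\setminus\{0\}$, exactness and surjectivity follow from the local symbol expansion and a partition-of-unity quantization, and the composition formula is obtained by freezing coefficients and reducing to convolution on the tangent group $G_{x}M$. If anything, you could mention that the well-definedness of $\sigma_{m}$ under coordinate change is itself a nontrivial lemma in Ponge's treatment, but as a proof plan there is no genuine gap.
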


Next we consider approximate inverses of Heisenberg pseudodifferential operators.
We write $A \sim A^{\prime}$ if $A - A^{\prime}$ is a smoothing operator.
Let $A \in \Hpsido^{m}(M)$.
An operator $B \in \Hpsido^{- m}(M)$ is called a \emph{parametrix} of $A$
if $A B \sim I$ and $B A \sim I$.
The existence of a parametrix of a Heisenberg pseudodifferential operator
is determined only by its Heisenberg principal symbol.

\begin{proposition}[\cite{Ponge2008-Book}*{Proposition 3.3.1}]
\label{prop:equivalent-conditions-for-existence-of-parametrix}
	Let $A \in \Hpsido^{m}(M)$
	with Heisenberg principal symbol $a \in \Hsymbol^{m}(M)$.
	Then the following are equivalent:
	\begin{enumerate}
		\item $A$ has a parametrix;
		\item there exists $B \in \Hpsido^{- m}(M)$ such that
			$A B - I, B A - I \in \Hpsido^{- 1}(M)$;
		\item there exists $b \in \Hsymbol^{-m}(M)$ such that
			$a \ast b = b \ast a = 1$.
	\end{enumerate}
\end{proposition}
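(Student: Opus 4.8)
The plan is to prove the cycle of implications $(1) \Rightarrow (2) \Rightarrow (3) \Rightarrow (1)$. The first two implications are formal consequences of the two features of the Heisenberg principal symbol recorded in \cref{prop:Heisenberg-principal-symbol}, the exactness of the symbol sequence and its multiplicativity, so the genuine analytic content is concentrated in the last implication.

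For $(1) \Rightarrow (2)$ there is nothing to do: a parametrix $B$ satisfies $A B - I, B A - I \in \Hpsido^{- \infty}(M) \subset \Hpsido^{- 1}(M)$, so the same $B$ witnesses $(2)$. For $(2) \Rightarrow (3)$ I would set $b \coloneqq \sigma_{- m}(B) \in \Hsymbol^{- m}(M)$. Since $A B \in \Hpsido^{0}(M)$ while $A B - I \in \Hpsido^{- 1}(M) = \Ker \sigma_{0}$, the operators $A B$ and $I$ share the same order-zero symbol; combining this with the multiplicativity $\sigma_{0}(A B) = \sigma_{m}(A) \ast \sigma_{- m}(B) = a \ast b$ and $\sigma_{0}(I) = 1$ gives $a \ast b = 1$. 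The relation $B A - I \in \Hpsido^{- 1}(M)$ yields $b \ast a = 1$ in the same way.

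The heart of the argument is $(3) \Rightarrow (1)$. Using the surjectivity of $\sigma_{- m}$ in \cref{prop:Heisenberg-principal-symbol}, I would first lift $b$ to some $B_{0} \in \Hpsido^{- m}(M)$ with $\sigma_{- m}(B_{0}) = b$; then $\sigma_{0}(A B_{0}) = a \ast b = 1 = \sigma_{0}(I)$ forces $R \coloneqq A B_{0} - I \in \Hpsido^{- 1}(M)$, and symmetrically $S \coloneqq B_{0} A - I \in \Hpsido^{- 1}(M)$. It remains to upgrade these order-$(-1)$ errors to smoothing errors. Because $\bigcup_{m} \Hpsido^{m}(M)$ is a filtered algebra, the powers obey $R^{k} \in \Hpsido^{- k}(M)$, with orders tending to $- \infty$; invoking the asymptotic completeness of the Heisenberg calculus, I would form $C \in \Hpsido^{0}(M)$ with $C \sim \sum_{k \geq 0} (- 1)^{k} R^{k}$. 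The telescoping identity $(I + R) \sum_{k = 0}^{N} (- 1)^{k} R^{k} = I + (- 1)^{N} R^{N + 1}$ then gives $(I + R) C \sim I$, so $B_{r} \coloneqq B_{0} C$ is a right parametrix, $A B_{r} \sim I$; treating $S$ in the same manner produces a left parametrix $B_{\ell}$ with $B_{\ell} A \sim I$. Finally the reconciliation $B_{\ell} \sim B_{\ell}(A B_{r}) = (B_{\ell} A) B_{r} \sim B_{r}$ shows that the two agree modulo smoothing, and their common value is a two-sided parametrix, which is exactly $(1)$.

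The main obstacle is precisely this asymptotic summation step. Every other move is bookkeeping with the exact sequence and the multiplicativity of the principal symbol, but the passage from an inverse modulo $\Hpsido^{- 1}(M)$ to an inverse modulo $\Hpsido^{- \infty}(M)$ rests on the calculus being closed under asymptotic sums of operators of decreasing order---a genuine analytic property of the Heisenberg calculus rather than a formal consequence of \cref{prop:Heisenberg-principal-symbol}.
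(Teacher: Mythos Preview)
The paper does not actually prove this proposition; it is quoted as \cite{Ponge2008-Book}*{Proposition 3.3.1} and used as a black box in \cref{section:Heisenberg-calculus}. Your argument is the standard one and is correct: the cycle $(1)\Rightarrow(2)\Rightarrow(3)$ is immediate from \cref{prop:Heisenberg-principal-symbol}, and for $(3)\Rightarrow(1)$ you lift $b$ to $B_{0}$, obtain errors $R,S\in\Hpsido^{-1}(M)$, and then use asymptotic completeness of the Heisenberg calculus to sum the Neumann series and reconcile the one-sided parametrices. You are also right to flag that the asymptotic-summation step is the only place where something beyond \cref{prop:Heisenberg-principal-symbol} is needed; this is a standard feature of the Heisenberg calculus (see \cite{Ponge2008-Book} or \cite{Beals-Greiner1988}) but is not stated in the present paper, so in a self-contained write-up you would need to cite it explicitly.
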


Now consider the Heisenberg differential operator $\Delta_{b} + I$ of order $2$.
It is known that this operator has a parametrix;
see the proof of~\cite{Ponge2008-Book}*{Proposition 3.5.7} for example.
Since $\Delta_{b} + I$ is positive and self-adjoint,
the $k / 2$-th power $(\Delta_{b} + I)^{k / 2}$ of $\Delta_{b} + I$,
$k \in \bbZ$,
is a Heisenberg pseudodifferential operator of order $k$~\cite{Ponge2008-Book}*{Theorems 5.3.1 and 5.4.10}.
Using this operator,
we define
\begin{equation}
	\HSobolev^{k}(M)
	:= \Set{ u \in \scrD^{\prime}(M) \mid (\Delta_{b} + I)^{k / 2} u \in L^{2}(M) }.
\end{equation}
This space is a Hilbert space with the inner product
\begin{equation}
	\iproduct{u}{v}_{k}
	= \iproduct{(\Delta_{b} + I)^{k / 2} u}{(\Delta_{b} + I)^{k / 2} v}_{L^{2}}.
\end{equation}
The space $C^{\infty}(M)$ is dense in $\HSobolev^{k}(M)$ for any $k \in \bbZ$,
and we have
\begin{equation}
	C^{\infty}(M) = \bigcap_{k \in \bbZ} \HSobolev^{k}(M),
	\qquad
	\scrD^{\prime}(M) = \bigcup_{k \in \bbZ} \HSobolev^{k}(M)
\end{equation}
as topological vector spaces~\cite{Ponge2008-Book}*{Proposition 5.5.3}.
Remark that
the Hilbert space $\HSobolev^{k}(M)$ coincides with the Folland-Stein space $S^{k, 2}(M)$
as a topological vector space~\cite{Ponge2008-Book}*{Proposition 5.5.5}.
Heisenberg pseudodifferential operators act on these Hilbert spaces as follows:

\begin{proposition}[\cite{Ponge2008-Book}*{Propositions 5.5.8} and \cite{Takeuchi2023-GJMS}*{Proposition 4.6}]
\label{prop:mapping-properties-of-Hpsido}
	Any $A \in \Hpsido^{m}(M)$ extends to a continuous linear operator
	\begin{equation}
		A \colon \HSobolev^{k + m}(M) \to \HSobolev^{k}(M)
	\end{equation}
	for every $k \in \bbZ$.
	In particular if $m < 0$,
	the operator $A \colon L^{2}(M) \to L^{2}(M)$ is compact.
\end{proposition}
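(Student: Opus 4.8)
The plan is to prove the two assertions in turn: first the Sobolev continuity for general $m$ and $k$, which I reduce to the single base case $m = k = 0$ (the $L^{2}$-boundedness of operators of order zero) by conjugating with powers of $\Delta_{b} + I$; and then the compactness statement, which I obtain by combining the continuity with a compact embedding of Heisenberg Sobolev spaces. Throughout I use that $\bigcup_{m \in \bbZ} \Hpsido^{m}(M)$ is a filtered algebra, that $(\Delta_{b} + I)^{k/2} \in \Hpsido^{k}(M)$ for every $k \in \bbZ$, and that composition of symbols is governed by \cref{prop:Heisenberg-principal-symbol}.

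For the continuity, the idea is to straighten $A$ to an order-zero operator. Since the calculus is a filtered algebra, the operator
\begin{equation}
	B \coloneqq (\Delta_{b} + I)^{k/2}\, A\, (\Delta_{b} + I)^{-(k + m)/2}
\end{equation}
lies in $\Hpsido^{k + m - (k + m)}(M) = \Hpsido^{0}(M)$. Granting that every element of $\Hpsido^{0}(M)$ is bounded on $L^{2}(M)$, one computes for $u \in C^{\infty}(M)$, writing $\norm{\cdot}_{k}$ for the norm of $\HSobolev^{k}(M)$ and using $\HSobolev^{0}(M) = L^{2}(M)$,
\begin{equation}
	\begin{aligned}
		\norm{A u}_{k}
		= \norm{(\Delta_{b} + I)^{k/2} A u}_{L^{2}}
		&= \norm{B\, (\Delta_{b} + I)^{(k + m)/2} u}_{L^{2}} \\
		&\leq \norm{B}_{L^{2} \to L^{2}}\, \norm{u}_{k + m},
	\end{aligned}
\end{equation}
where the middle equality is the identity $(\Delta_{b} + I)^{k/2} A = B\, (\Delta_{b} + I)^{(k + m)/2}$. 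Since $C^{\infty}(M)$ is dense in $\HSobolev^{k + m}(M)$, this estimate extends $A$ to a continuous operator $\HSobolev^{k + m}(M) \to \HSobolev^{k}(M)$.

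The hard part is the base case, the $L^{2}$-boundedness of $B \in \Hpsido^{0}(M)$, and here I would run the Hörmander square-root trick. Replacing $B$ by $B^{*} B$, it suffices to bound $\norm{B u}_{L^{2}}^{2} = \iproduct{B^{*} B u}{u}_{L^{2}}$. For $C > 0$ large, the symbol $C^{2} - \sigma_{0}(B^{*} B) \in \Hsymbol^{0}(M)$ is invertible in $(\Hsymbol^{0}(M), \ast)$ (cf. \cref{prop:equivalent-conditions-for-existence-of-parametrix}), and being positive and self-adjoint it admits a square root $r$ in the symbol algebra; lifting $r$ via the exact sequence of \cref{prop:Heisenberg-principal-symbol} and correcting the lower-order terms inductively (using asymptotic completeness of the calculus) produces $R \in \Hpsido^{0}(M)$ with $R^{*} R \sim C^{2} I - B^{*} B$, i.e. $C^{2} I - B^{*} B - R^{*} R = S \in \Hpsido^{-\infty}(M)$. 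Then
\begin{equation}
	\begin{aligned}
		\norm{B u}_{L^{2}}^{2}
		= \iproduct{B^{*} B u}{u}_{L^{2}}
		&= C^{2} \norm{u}_{L^{2}}^{2} - \norm{R u}_{L^{2}}^{2} + \iproduct{S u}{u}_{L^{2}} \\
		&\leq \rbra*{C^{2} + \norm{S}_{L^{2} \to L^{2}}} \norm{u}_{L^{2}}^{2},
	\end{aligned}
\end{equation}
since a smoothing operator on the closed manifold $M$ has a smooth kernel, hence is $L^{2}$-bounded. The genuine obstacle is the construction of the square-root symbol: because the product $\ast$ is noncommutative (it encodes the group structure of $\frakg M$), this is not an elementary algebraic manipulation but requires the functional calculus of the fiberwise model operators, which is the analytic heart of the statement.

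For the compactness, take $m < 0$ and apply the continuity with $k = -m > 0$, so that $A$ maps $\HSobolev^{0}(M) = L^{2}(M)$ continuously into $\HSobolev^{-m}(M)$. It then suffices that the inclusion $\HSobolev^{-m}(M) \hookrightarrow \HSobolev^{0}(M) = L^{2}(M)$ be compact, for then $A \colon L^{2}(M) \to L^{2}(M)$ factors through a bounded map followed by a compact one and is compact. I would establish this compact embedding independently of the Heisenberg calculus — to avoid circularity, since compactness of negative-order operators and compactness of the embeddings are essentially equivalent — by sandwiching $\HSobolev^{s}(M)$ between classical Sobolev spaces (horizontal derivatives cost one classical order, the Reeb derivative costs two) and invoking the classical Rellich lemma on the closed manifold $M$. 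This yields the asserted compactness and completes the plan.
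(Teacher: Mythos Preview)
The paper does not supply a proof of this proposition; it is stated with citations to Ponge's monograph and to an earlier paper of the author, and no argument appears in the text. Your outline is the standard route to these results --- reduce to the order-zero $L^{2}$-boundedness by conjugating with $(\Delta_{b}+I)^{k/2}$, handle the base case by a H\"ormander-type square-root construction, and deduce compactness from a Rellich-type embedding --- and you correctly flag the genuinely nontrivial step, namely that in the noncommutative symbol algebra $(\Hsymbol^{0}(M),\ast)$ the square root is not an algebraic triviality but requires the model-operator functional calculus developed in the cited references.
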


\section{Spectrum of the CR Paneitz operator and embeddability}
\label{section:spectrum-of-the-CR-Paneitz-operator-and-embeddability}

Let $(M, T^{1, 0} M)$ be a closed (not necessarily embeddable)
strictly pseudoconvex CR manifold of dimension three
and $\theta$ be a contact form on $M$.
Since $\Box_{b}$ does not have closed range in general,
there does not exist the partial inverse of $\Box_{b}$.
However,
we have a \emph{formal} partial inverse of $\Box_{b}$
and a \emph{formal} orthogonal projection to $\Ker \Box_{b}$.

\begin{theorem}[\cite{Beals-Greiner1988}*{Proposition 25.4 and Corollaries 25.64 and 25.67}]
\label{thm:approximate-Szego-projection}
	There exist $S \in \Hpsido^{0}(M)$ and $N \in \Hpsido^{- 2}(M)$ satisfying
	\begin{equation}
		\Box_{b} N + S
		\sim N \Box_{b} + S
		\sim I, 
		\qquad
		S
		\sim S^{\ast}
		\sim S^{2},
		\qquad
		\Box_{b} S
		\sim S \Box_{b}
		\sim 0,
		\qquad
		\delb_{b} S
		\sim 0.
	\end{equation}
\end{theorem}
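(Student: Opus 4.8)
The plan is to solve the problem first at the level of Heisenberg principal symbols, where everything is governed by the model operator of $\Box_{b}$ on the Heisenberg group, and then to remove the lower-order errors by asymptotic summation in the filtered algebra $\bigcup_{m \in \bbZ} \Hpsido^{m}(M)$. First I would analyze the principal symbol $\sigma_{2}(\Box_{b}) \in \Hsymbol^{2}(M)$. Over each point of $M$ it is the symbol of the model operator $\Box_{b}^{0}$ on $\bbH^{3}$; from \cref{eq:Kohn-Laplacian,eq:complex-conjugate-of-Kohn-Laplacian} one has $2 \Box_{b} = \Delta_{b} + \sqrt{-1}\, T$, so under the Bargmann representation theory of $\bbH^{3}$ the operator $\Box_{b}^{0}$ is the Folland--Stein operator at its critical parameter. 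There it fails to be invertible, the obstruction being a one-dimensional space of lowest weight vectors which together constitute the range of the \Szego projector. In particular $\sigma_{2}(\Box_{b})$ is not invertible in the symbol algebra, so \cref{prop:equivalent-conditions-for-existence-of-parametrix} does not apply and $\Box_{b}$ has no genuine parametrix.

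Next I would construct directly in the symbol algebra two symbols $s \in \Hsymbol^{0}(M)$ and $n \in \Hsymbol^{-2}(M)$, where $s$ is the symbol of the orthogonal projection onto the kernel of $\Box_{b}^{0}$ and $n$ inverts $\Box_{b}^{0}$ on its orthogonal complement. Working in the Bargmann--Fock model, in which both the ground-state projection and the resolvent on its complement are written down explicitly in terms of Hermite functions, one verifies with the product $\ast$ and \cref{prop:Heisenberg-principal-symbol}(ii) the symbolic identities
\begin{equation}
	\sigma_{2}(\Box_{b}) \ast n + s = 1, \qquad s \ast s = s = s^{\ast}, \qquad \sigma_{2}(\Box_{b}) \ast s = 0,
\end{equation}
where $s^{\ast}$ denotes the symbol of the formal adjoint; the corresponding left identities follow from the self-adjointness of $\Box_{b}$. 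By \cref{prop:Heisenberg-principal-symbol}(i) I would then lift these to operators $S_{0} \in \Hpsido^{0}(M)$ and $N_{0} \in \Hpsido^{-2}(M)$ with $\sigma_{0}(S_{0}) = s$ and $\sigma_{-2}(N_{0}) = n$, so that $\Box_{b} N_{0} + S_{0} - I$, $S_{0}^{2} - S_{0}$, $S_{0}^{\ast} - S_{0}$, and $\Box_{b} S_{0}$ each drop one order below their naive order.

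To upgrade these identities from holding modulo $\Hpsido^{-1}(M)$ to holding modulo smoothing operators, I would run an iterative correction: at each stage the leading symbol of the remaining error is again governed by the same projection and resolvent data on the two pieces of the representation, so it can be absorbed into a next-order correction of $N_{0}$ and $S_{0}$. Since the Heisenberg calculus is asymptotically complete, the resulting asymptotic series sum to $N \in \Hpsido^{-2}(M)$ and $S \in \Hpsido^{0}(M)$ satisfying $\Box_{b} N + S \sim N \Box_{b} + S \sim I$, $S \sim S^{\ast} \sim S^{2}$, and $\Box_{b} S \sim S \Box_{b} \sim 0$. Finally, since $\iproduct{\Box_{b} u}{u}_{L^{2}} = \norm{\delbb u}_{L^{2}}^{2}$, the kernel of $\Box_{b}^{0}$ coincides with that of the model of $\delbb$, so $\sigma_{2}(\Box_{b}) \ast s = 0$ forces $s$ to be annihilated by the symbol of $\delbb$; carrying this through the iteration yields $\delbb S \sim 0$.

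The main obstacle is the explicit symbolic construction of the second step: identifying $\Box_{b}^{0}$ as the Folland--Stein operator at its critical weight and writing down the \Szego projector $s$ together with the partial inverse $n$ in the Bargmann--Fock model. This representation-theoretic input --- the one-dimensional obstruction at the critical parameter and the resolvent on its complement --- is where the real difficulty lies. Once the leading-order solution is secured, the passage to equality modulo smoothing operators is the routine asymptotic-completeness argument of the Heisenberg calculus.
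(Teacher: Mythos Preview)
The paper does not supply its own proof of this theorem; it is quoted as a known result from Beals--Greiner, and the surrounding text only \emph{uses} the operators $S$ and $N$ without rederiving them. So there is nothing in the paper to compare your argument against beyond the citation itself.

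That said, your outline is exactly the strategy of the Beals--Greiner construction and is correct in its architecture: one solves the problem first for the model operator on the Heisenberg group (where $\Box_{b}^{0}$ is the Folland--Stein operator at the critical parameter, the obstruction to invertibility is the lowest Landau level, and the model \Szego projector and partial inverse are explicit), lifts these to $S_{0} \in \Hpsido^{0}(M)$ and $N_{0} \in \Hpsido^{-2}(M)$ via \cref{prop:Heisenberg-principal-symbol}, and then iterates in the filtered algebra using asymptotic completeness to kill the lower-order remainders. Your identification of the hard step --- the explicit model computation producing $s$ and $n$ with $\sigma_{2}(\Box_{b}) \ast n + s = 1$, $s \ast s = s$, $\sigma_{2}(\Box_{b}) \ast s = 0$ --- is accurate; this is precisely what occupies the bulk of Chapter~25 in Beals--Greiner. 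One small point: the relation $\delbb S \sim 0$ is slightly more delicate than you indicate, since $\delbb$ maps functions to sections of $(T^{0,1}M)^{\ast}$ and the iterative correction must be carried out compatibly with this; Beals--Greiner handle it by working with the full $\delbb$-complex rather than deducing it a posteriori from $\Box_{b} S \sim 0$, but your heuristic (that the model kernels coincide) is the right reason it works.
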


Consider the CR Paneitz operator $\Paneitz$ on $(M, T^{1, 0} M, \theta)$.
Set $\Pi_{0} \coloneqq S + \ovS \in \Hpsido^{0}(M)$
and $G_{0} \coloneqq N \ovN \in \Hpsido^{- 4}(M)$.
Then we have
\begin{equation}
\label{eq:first-approximation-of-partial-inverse}
	R_{0}
	\coloneqq \Paneitz G_{0} + \Pi_{0} - I \in \Hpsido^{- 1}(M).
\end{equation}
Moreover,
$I + R_{0}$ has a parametrix $A_{0}$ satisfying $A_{0} - I \in \Hpsido^{- 1}(M)$%
~\cite{Takeuchi2024-preprint}*{Lemma 4.5}.
Set
\begin{equation}
	\Pi_{\infty}
	\coloneqq \Pi_{0} A_{0} \in \Hpsido^{0}(M),
	\qquad
	G_{\infty}
	\coloneqq (I - \Pi_{\infty}) G_{0} A_{0}
	\in \Hpsido^{- 4}(M).
\end{equation}
These satisfy
\begin{equation}
	G_{\infty} \Paneitz + \Pi_{\infty}
	\sim \Paneitz G_{\infty} + \Pi_{\infty}
	\sim I,
	\qquad
	\Pi_{\infty}^{\ast}
	\sim \Pi_{\infty}^{2}
	\sim \Pi_{\infty},
	\qquad
	\Pi_{\infty} \Paneitz
	\sim \Paneitz \Pi_{\infty}
	\sim 0;
\end{equation}
see \cite{Takeuchi2024-preprint}*{Proposition 4.6}.

Consider $\Paneitz$ as an unbounded closed operator acting on $L^{2}(M)$
by the maximal closed extension;
that is,
the domain $\Dom \Paneitz$ of $\Paneitz$ is defined by
\begin{equation}
	\Dom \Paneitz
	\coloneqq \Set{u \in L^{2}(M) | \text{$\Paneitz u$ in the weak sense is in $L^{2}(M)$}}.
\end{equation}
We can show that $u - \Pi_{\infty} u \in \HSobolev^{4}(M)$ for any $u \in \Dom \Paneitz$;
in particular,
$\Dom \Paneitz = \Ran \Pi_{\infty} + \HSobolev^{4}(M)$~\cite{Takeuchi2024-preprint}*{Lemma 4.7}.
Since $\Paneitz$ is symmetric on $\HSobolev^{4}(M)$,
we have that $\Paneitz$ is self-adjoint~\cite{Takeuchi2024-preprint}*{Theorem 1.1}.

Let $E$ be the resolution of the identity for $\Paneitz$ and fix $\lambda \geq 0$.
Set
\begin{equation}
	\pi_{\lambda} \coloneqq E(\clcl{- \lambda}{\lambda})
	\colon L^{2}(M) \to \Dom \Paneitz.
\end{equation}
This is an orthogonal projection of $L^{2}(M)$ and satisfies
$\pi_{\lambda} \Paneitz = \Paneitz \pi_{\lambda}$ on $\Dom \Paneitz$.
If $\lambda > 0$,
\begin{equation}
	N_{\lambda} \coloneqq \int_{\bbR} t^{- 1} \chi_{\clcl{- \lambda}{\lambda}^{c}}(t) \, d E(t)
	\colon L^{2}(M) \to \Dom \Paneitz
\end{equation}
is a continuous self-adjoint operator and satisfies
\begin{equation}
	\Paneitz N_{\lambda} + \pi_{\lambda} = I \text{\ on } L^{2}(M),
	\qquad
	N_{\lambda} \Paneitz + \pi_{\lambda} = I \text{\ on } \Dom \Paneitz.
\end{equation}
We can show that $\Paneitz \pi_{\lambda} \sim 0$ for $\lambda > 0$~\cite{Takeuchi2024-preprint}*{Theorem 4.8}.
This implies that $\Paneitz \pi_{\lambda}$ is a compact self-adjoint operator.
It follows from this fact that
$\Spec \Paneitz \pi_{\lambda}$ is discrete except $0$,
and $\Ker (\Paneitz \pi_{\lambda} - \mu I)$ is a finite dimensional subspace of $C^{\infty}(M)$
for any $\mu \neq 0$.
On the other hand,
$\Spec \Paneitz \cap \clcl{- \lambda}{\lambda} = \Spec \Paneitz \pi_{\lambda}$ and
\begin{equation}
	\Ker (\Paneitz - \mu I)
	= \Ker (\Paneitz \pi_{\lambda} - \mu I)
\end{equation}
for any $0 < \abs{\mu} \leq \lambda$.
This gives the following

\begin{theorem}[\cite{Takeuchi2024-preprint}*{Theorem 1.2}]
\label{thm:spectrum-of-CR-Paneitz}
	The set $\Spec \Paneitz \setminus \{0\}$ is a discrete subset of $\bbR \setminus \{0\}$
	and consists only of eigenvalues of finite multiplicity.
	Moreover,
	any eigenfunction of $\Paneitz$ corresponding to each non-zero eigenvalue is smooth.
\end{theorem}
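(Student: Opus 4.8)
The plan is to deduce the theorem from the spectral machinery assembled in the preceding discussion, localizing the global statement about $\Spec \Paneitz$ to each symmetric interval $\clcl{-\lambda}{\lambda}$, on which the problem reduces to the spectral theory of a compact self-adjoint operator. The essential input, already recorded above, is that $\Paneitz \pi_{\lambda}$ is a smoothing operator, hence compact, for every $\lambda > 0$; granting this together with the self-adjointness of $\Paneitz$ (so that $\Spec \Paneitz \subseteq \bbR$), the three assertions (discreteness, finite multiplicity, and smoothness of eigenfunctions) will all follow from standard facts about compact self-adjoint operators.

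First I would fix $\lambda > 0$ and use $\Paneitz \pi_{\lambda} \sim 0$ to conclude that $\Paneitz \pi_{\lambda}$ is smoothing; since a smoothing operator lies in $\Hpsido^{m}(M)$ for some $m < 0$, \cref{prop:mapping-properties-of-Hpsido} shows that $\Paneitz \pi_{\lambda} \colon L^{2}(M) \to L^{2}(M)$ is compact. It is self-adjoint because $\pi_{\lambda}$ is an orthogonal projection commuting with $\Paneitz$, and by the functional calculus $\Paneitz \pi_{\lambda} = \int_{\clcl{-\lambda}{\lambda}} t \, dE(t)$ is bounded and symmetric. The spectral theorem for compact self-adjoint operators then gives that $\Spec(\Paneitz \pi_{\lambda}) \setminus \{0\}$ consists of isolated real eigenvalues of finite multiplicity accumulating only at $0$, with each eigenspace $\Ker(\Paneitz \pi_{\lambda} - \mu I)$, $\mu \neq 0$, finite dimensional. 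Smoothness of these eigenfunctions is immediate from the smoothing property: if $\Paneitz \pi_{\lambda} u = \mu u$ with $\mu \neq 0$, then $u = \mu^{-1} \Paneitz \pi_{\lambda} u$ lies in the range of a smoothing operator, so $u \in C^{\infty}(M)$.

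Next I would transfer these conclusions to $\Paneitz$ itself using the identities $\Spec \Paneitz \cap \clcl{-\lambda}{\lambda} = \Spec(\Paneitz \pi_{\lambda})$ and $\Ker(\Paneitz - \mu I) = \Ker(\Paneitz \pi_{\lambda} - \mu I)$ valid for $0 < \abs{\mu} \leq \lambda$. To prove discreteness of $\Spec \Paneitz \setminus \{0\}$, I would take an arbitrary nonzero $\mu_{0} \in \Spec \Paneitz$ and choose $\lambda > \abs{\mu_{0}}$, so that $\mu_{0}$ lies in the interior of $\clcl{-\lambda}{\lambda}$; since the nonzero spectrum of $\Paneitz \pi_{\lambda}$ is discrete and accumulates only at $0$, there is an open interval about $\mu_{0}$ meeting $\Spec \Paneitz$ only at $\mu_{0}$, so $\mu_{0}$ is isolated in $\bbR \setminus \{0\}$. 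Finite multiplicity of $\mu_{0}$ and smoothness of the corresponding eigenfunctions follow at once from the kernel identity and the previous step. As every nonzero point of $\Spec \Paneitz$ is captured by some such $\lambda$, all three assertions are established.

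The genuinely hard part lies not in this deduction but in the input $\Paneitz \pi_{\lambda} \sim 0$, which rests on the Heisenberg-calculus construction of the formal parametrix $G_{\infty}$ and the formal projection $\Pi_{\infty}$. Within the present argument, the main point requiring care is the passage from the local interval statements to the global one, namely verifying that eigenvalues cannot accumulate at any nonzero real number. This is exactly what the choice $\lambda > \abs{\mu_{0}}$ secures, since each compact subset of $\bbR \setminus \{0\}$ is contained in some $\clcl{-\lambda}{\lambda}$ and therefore meets $\Spec \Paneitz$ in only finitely many eigenvalues, each of finite multiplicity.
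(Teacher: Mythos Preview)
Your proposal is correct and follows essentially the same route as the paper: reduce to the compact self-adjoint operator $\Paneitz \pi_{\lambda}$ via the smoothing property $\Paneitz \pi_{\lambda} \sim 0$, read off discreteness, finite multiplicity, and smoothness from the spectral theory of compact operators, and then transfer back to $\Paneitz$ using the identities $\Spec \Paneitz \cap \clcl{-\lambda}{\lambda} = \Spec(\Paneitz \pi_{\lambda})$ and $\Ker(\Paneitz - \mu I) = \Ker(\Paneitz \pi_{\lambda} - \mu I)$ for $0 < \abs{\mu} \leq \lambda$. Your write-up in fact supplies a bit more detail than the paper (the explicit bootstrap $u = \mu^{-1} \Paneitz \pi_{\lambda} u$ for smoothness, and the localization argument for discreteness), but the argument is the same.
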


Moreover,
$\pi_{\lambda} \in \Hpsido^{0}(M)$ and $N_{\lambda} \in \Hpsido^{- 4}(M)$ for any $\lambda > 0$%
~\cite{Takeuchi2024-preprint}*{Theorem 4.9};
note that $\pi_{0}$ is not necessarily a Heisenberg pseudodifferential operator%
~\cite{Takeuchi2024-preprint}*{Remark 5.8}.

In the reminder of this section,
we assume that $(M, T^{1, 0} M)$ is embeddable,
or equivalently,
$\Box_{b}$ has closed range.
In this case,
we can take as $S$ the orthogonal projection to $\Ker \Box_{b}$,
known as the \emph{\Szego projection},
and as $N$ the partial inverse of $\Box_{b}$~\cite{Beals-Greiner1988}*{Theorem 25.20}.
Note that $\Paneitz S = \Paneitz \ovS = 0$.
It follows from \cite{Hsiao2015}*{Lemma 4.2} that
\begin{equation}
	\Pi_{0}^{2}
	= (S + \ovS)^{2}
	= S + \ovS + S \ovS + \ovS S
	\sim \Pi_{0}.
\end{equation}
Composing $\Pi_{0}$ to \cref{eq:first-approximation-of-partial-inverse} from the left gives
\begin{equation}
	\Pi_{0} R_{0}
	= \Pi_{0} \Paneitz G_{0} + \Pi_{0}^{2} - \Pi_{0}
	\sim 0.
\end{equation}
It follows from $\Pi_{\infty} \sim \Pi_{\infty}^{\ast} = A_{0}^{\ast} \Pi_{0}$ that
\begin{align}
	\Pi_{\infty}
	\sim A_{0}^{\ast} \Pi_{0}
	\sim A_{0}^{\ast} \Pi_{0} (I + R_{0})
	\sim \Pi_{\infty} (I + R_{0})
	= \Pi_{0} A_{0} (I + R_{0})
	\sim \Pi_{0}.
\end{align}

\begin{theorem}[\cite{Hsiao2015}*{Theorem 4.7}]
\label{thm:closed-range-of-CR-Paneitz-operator}
	If $(M, T^{1, 0} M)$ is embeddable,
	then $\Paneitz$ has closed range
	and $\pi_{0} \in \Hpsido^{0}(M)$.
\end{theorem}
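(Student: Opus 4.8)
The plan is to deduce both assertions from a single spectral statement, namely that $0$ is an \emph{isolated} point of $\Spec \Paneitz$. Once this gap is known, the machinery already assembled yields everything at once. Indeed, suppose $\Spec \Paneitz \cap (\clcl{- \lambda_{0}}{\lambda_{0}} \setminus \{0\}) = \emptyset$ for some $\lambda_{0} > 0$. Then $\Paneitz$ restricted to $(\Ker \Paneitz)^{\perp}$ satisfies $\norm{\Paneitz u}_{L^{2}} \geq \lambda_{0} \norm{u}_{L^{2}}$, so $\Ran \Paneitz$ is closed; moreover $E(\clcl{- \lambda}{\lambda}) = E(\{0\}) = \pi_{0}$ for every $0 < \lambda < \lambda_{0}$, whence $\pi_{0} = \pi_{\lambda} \in \Hpsido^{0}(M)$ by the regularity of $\pi_{\lambda}$ for $\lambda > 0$ recorded above. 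Thus the whole theorem reduces to producing the spectral gap.

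The key algebraic input, which is where embeddability enters, is that every $u \in (\Ker \Paneitz)^{\perp}$ is annihilated by $\Pi_{0} = S + \ovS$. Since $\Paneitz S = \Paneitz \ovS = 0$, both $\Ran S$ and $\Ran \ovS$ lie in $\Ker \Paneitz$; as $S$ is an orthogonal projection and $u \perp \Ker \Paneitz$, one computes $\norm{S u}_{L^{2}}^{2} = \iproduct{u}{S u}_{L^{2}} = 0$, so $S u = 0$, and likewise $\ovS u = 0$. Hence $\Pi_{0} u = 0$, and therefore $\Pi_{\infty} u = (\Pi_{\infty} - \Pi_{0}) u$ is the image of $u$ under a smoothing operator, using $\Pi_{\infty} \sim \Pi_{0}$ established above.

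To produce the gap I would argue by contradiction and compactness. Suppose there is a sequence of nonzero eigenvalues $\lambda_{j} \to 0$ with eigenfunctions $u_{j} \in (\Ker \Paneitz)^{\perp}$ normalized by $\norm{u_{j}}_{L^{2}} = 1$. Applying the formal partial inverse relation $G_{\infty} \Paneitz + \Pi_{\infty} \sim I$ to $u_{j}$ gives
\begin{equation}
	u_{j} = \lambda_{j} G_{\infty} u_{j} + \Pi_{\infty} u_{j} - R u_{j},
\end{equation}
with $R$ smoothing. On the right-hand side, the first term tends to $0$ in $L^{2}(M)$ since $G_{\infty} \in \Hpsido^{-4}(M)$ is bounded on $L^{2}(M)$ and $\lambda_{j} \to 0$; the second term equals $(\Pi_{\infty} - \Pi_{0}) u_{j}$ by the previous paragraph, hence is the image of a bounded sequence under a smoothing, and thus compact, operator; and the third term lies in a relatively compact set for the same reason. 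Consequently $\{u_{j}\}$ is relatively compact in $L^{2}(M)$, so a subsequence converges to some $u_{\ast}$ with $\norm{u_{\ast}}_{L^{2}} = 1$. Since $\Paneitz u_{j} = \lambda_{j} u_{j} \to 0$ and $\Paneitz$ is closed, $u_{\ast} \in \Ker \Paneitz$; but $u_{\ast} \in (\Ker \Paneitz)^{\perp}$ as a limit of such vectors, forcing $u_{\ast} = 0$, a contradiction. This establishes the gap and completes the proof.

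The main obstacle is the gap argument of the last paragraph: everything hinges on upgrading the weak statement $\Paneitz u_{j} \to 0$ to strong $L^{2}$-convergence of a subsequence of the $u_{j}$ themselves, and this compactness is exactly what the formal inverse $G_{\infty}$ together with the smoothing identity $\Pi_{\infty} \sim \Pi_{0}$ supply. Embeddability is used only through $\Pi_{\infty} \sim \Pi_{0}$ and $\Paneitz S = \Paneitz \ovS = 0$; in the non-embeddable case these break down, consistent with $\pi_{0}$ no longer being a Heisenberg pseudodifferential operator.
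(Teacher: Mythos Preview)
Your proof is correct and uses the same ingredients as the paper: the identity $\Pi_{\infty} \sim \Pi_{0}$ (valid only in the embeddable case), the observation that $\Pi_{0}$ annihilates $(\Ker \Paneitz)^{\perp}$ because $S, \ovS$ are genuine orthogonal projections with range in $\Ker \Paneitz$, and the parametrix relation $G_{\infty} \Paneitz + \Pi_{\infty} \sim I$ to convert this into a compactness statement.

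The only difference is packaging. The paper works operator-theoretically: it writes
\begin{equation}
	\pi_{\lambda} - \pi_{0}
	= G_{\infty} \Paneitz \pi_{\lambda} - R_{\infty} (\pi_{\lambda} - \pi_{0}),
	\qquad
	R_{\infty} \coloneqq G_{\infty} \Paneitz + \Pi_{0} - I \in \Hpsido^{-\infty}(M),
\end{equation}
observes that the right-hand side maps $L^{2}(M)$ into $C^{\infty}(M)$ (using $\Paneitz \pi_{\lambda} \sim 0$), and concludes that the orthogonal projection $\pi_{\lambda} - \pi_{0}$ is compact, hence of finite rank, which immediately gives the gap. Your sequential contradiction argument is the same compactness statement unpacked at the level of eigenfunctions. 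Both arguments rely implicitly on \cref{thm:spectrum-of-CR-Paneitz} to know that points of $\Spec \Paneitz \setminus \{0\}$ near $0$ are in fact eigenvalues. The paper's version is slightly more economical, but there is no substantive difference.
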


\begin{proof}
	Set
	\begin{equation}
		R_{\infty}
		\coloneqq G_{\infty} \Paneitz + \Pi_{0} - I
		= (G_{\infty} \Paneitz + \Pi_{\infty} - I) - (\Pi_{\infty} - \Pi_{0})
		\in \Hpsido^{- \infty}(M).
	\end{equation}
	Fix $\lambda > 0$.
	It follows from $\Ran S, \Ran \ovS \subset \Ran \pi_{0} = \Ker \Paneitz \subset \Ran \pi_{\lambda}$ that
	\begin{equation}
		E(\clcl{- \lambda}{\lambda} \setminus \{0\})
		= \pi_{\lambda} - \pi_{0}
		= G_{\infty} \Paneitz \pi_{\lambda} - R_{\infty} (\pi_{\lambda} - \pi_{0}).
	\end{equation}
	Since the right hand side maps $L^{2}(M)$ to $C^{\infty}(M)$ continuously,
	the orthogonal projection $E(\clcl{- \lambda}{\lambda} \setminus \{0\})$ is a compact operator,
	and so it is of finite rank.
	Hence there exists $\varepsilon > 0$ such that
	$\Spec \Paneitz \cap \clcl{- \varepsilon}{\varepsilon} = \{0\}$,
	which means that $\Paneitz$ has closed range.
	Moreover,
	this yields that $\pi_{0} = \pi_{\varepsilon} \in \Hpsido^{0}(M)$.
\end{proof}

Summarizing the results in this section and \cref{section:non-negativity-of-the-CR-Paneitz-operator-and-embeddability} yields the following

\begin{theorem}
	Assume that $(M, T^{1, 0} M)$ is embeddable.
	Then $\Spec \Paneitz$ is a discrete subset of $\clop{0}{\infty}$.
	Moreover,
	the eigenspace of $\Paneitz$ corresponding to each non-zero eigenvalue
	is a finite dimensional subspace of $C^{\infty}(M)$.
	Furthermore,
	$\scrP$ is dense in $\Ker \Paneitz$.
\end{theorem}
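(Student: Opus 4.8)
The plan is to read off the spectral statements from the results already assembled and to isolate the density of $\scrP$ as the only substantial point. First I would invoke \cref{thm:non-negativity-of-CR-Paneitz-operator}, which gives $\Paneitz \geq 0$ and hence $\Spec \Paneitz \subset \clop{0}{\infty}$. By \cref{thm:spectrum-of-CR-Paneitz} the set $\Spec \Paneitz \setminus \{0\}$ is discrete and every non-zero eigenvalue has finite multiplicity with smooth eigenfunctions, which is the second assertion. Since $(M, T^{1, 0} M)$ is embeddable, \cref{thm:closed-range-of-CR-Paneitz-operator} shows $\Paneitz$ has closed range, so $0$ is isolated in $\Spec \Paneitz$; together with the previous point this makes $\Spec \Paneitz$ a discrete subset of $\clop{0}{\infty}$.

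It remains to prove that $\scrP$ is dense in $\Ker \Paneitz$. I would first record the easy inclusion: if $u \in \scrP$ then $d d^{c}_{\CR} u = 0$ by \cref{lem:characterization-of-CR-pluriharmonic}, so $P_{1} u = P_{\ovone} u = 0$ and $\Paneitz u = 0$; as $\Paneitz$ is a real operator, the complex span $\scrP_{\bbC} = \scrP \otimes \bbC$ also lies in $\Ker \Paneitz$. The strategy for density is to show that the $L^{2}$-orthogonal complement of $\scrP$ inside $\Ker \Paneitz$ is trivial, so I would take $u \in \Ker \Paneitz$ with $u \perp \scrP$ and aim to conclude $u = 0$.

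The first step is to see that $u$ is orthogonal to the \Szego ranges. Since $\operatorname{Re} f$ and $\operatorname{Im} f$ lie in $\scrP$ for every smooth CR function $f$, the vector $u$ is orthogonal to all smooth CR functions; because $S \in \Hpsido^{0}(M)$ carries $C^{\infty}(M)$ into smooth CR functions and $C^{\infty}(M)$ is dense in $L^{2}(M)$, these are dense in $\Ran S$, whence $S u = 0$, and likewise $\ovS u = 0$. The second step is a smoothing bootstrap: using the relation $G_{\infty} \Paneitz + \Pi_{\infty} \sim I$ together with $\Paneitz u = 0$ gives $u = \Pi_{\infty} u + K_{1} u$ with $K_{1}$ smoothing, and since $\Pi_{\infty} \sim \Pi_{0} = S + \ovS$ in the embeddable case, $u = S u + \ovS u + K u = K u$ for a smoothing operator $K$. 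Smoothing operators send $L^{2}(M)$ into $C^{\infty}(M)$ by \cref{prop:mapping-properties-of-Hpsido}, so $u \in C^{\infty}(M)$. Now \cref{thm:non-negativity-of-CR-Paneitz-operator} identifies smooth kernel elements with CR pluriharmonic functions; applying it to $\operatorname{Re} u$ and $\operatorname{Im} u$ gives $u \in \scrP_{\bbC}$. As $u \perp \scrP$ forces $u \perp \scrP_{\bbC}$ and in particular $u \perp u$, we get $u = 0$, proving density.

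The step I expect to be the main obstacle is precisely this smoothing bootstrap, where embeddability enters twice: through \cref{thm:closed-range-of-CR-Paneitz-operator} to know that $S$ and $\ovS$ are honest \Szego projections onto closed ranges, and through the Heisenberg-calculus identity $\Pi_{\infty} \sim S + \ovS$ with a genuinely smoothing remainder. It is this combination that lets orthogonality to $\Ran S$ and $\Ran \ovS$ promote an a priori merely $L^{2}$ kernel element to a smooth, hence CR pluriharmonic, function, and so close the argument.
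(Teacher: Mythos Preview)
Your argument is correct, and for the spectral statements it coincides with the paper's proof.  For the density of $\scrP$ in $\Ker \Paneitz$, however, the paper takes a noticeably shorter route.  Since \cref{thm:closed-range-of-CR-Paneitz-operator} already yields $\pi_{0} \in \Hpsido^{0}(M)$, the projection $\pi_{0}$ preserves $C^{\infty}(M)$; one then simply approximates $u \in \Ker \Paneitz$ in $L^{2}$ by smooth $u_{j}$ and observes that $\pi_{0} u_{j} \in \Ker \Paneitz \cap C^{\infty}(M) = \scrP_{\bbC}$ with $\pi_{0} u_{j} \to \pi_{0} u = u$.  Your proof instead argues by contraposition through the orthogonal complement: from $u \perp \scrP$ you extract $S u = \ovS u = 0$, then combine the parametrix identity $G_{\infty} \Paneitz + \Pi_{\infty} \sim I$ with $\Pi_{\infty} \sim \Pi_{0} = S + \ovS$ to force $u$ to equal a smoothing operator applied to itself, hence smooth, hence in $\scrP_{\bbC}$.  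Both approaches ultimately rest on the same Heisenberg-calculus input from the embeddable case, namely $\Pi_{\infty} \sim \Pi_{0}$; the paper packages it in the single fact that $\pi_{0}$ is a genuine order-zero Heisenberg operator and gets a two-line proof, while your version unwinds the same information through the \Szego projections and an explicit regularity bootstrap.  Your route has the expository benefit of showing exactly where embeddability enters (it is what kills the $\Pi_{0}$-contribution), at the price of being longer.
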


\begin{proof}
	The first and second statement follows from \cref{thm:non-negativity-of-CR-Paneitz-operator,%
	thm:spectrum-of-CR-Paneitz,thm:closed-range-of-CR-Paneitz-operator}.
	It remains to show that $\scrP$ is dense in $\Ker \Paneitz$.
	Let $u \in \Ker \Paneitz$ and take $u_{j} \in C^{\infty}(M)$
	such that $u_{j} \to u$ in $L^{2}(M)$ as $j \to + \infty$.
	Since $\pi_{0}$ maps $C^{\infty}(M)$ to itself
	and $L^{2}(M)$ to itself continuously,
	$\pi_{0} u_{j} \in \Ker \Paneitz \cap C^{\infty}(M) = \scrP$
	and $\pi_{0} u_{j} \to \pi_{0} u = u$.
	This shows $u \in \overline{\scrP}$.
\end{proof}

The proof of \cref{thm:closed-range-of-CR-Paneitz-operator}
depends crucially on the existence of the partial inverse of $\Box_{b}$,
which is equivalent to the embeddability of $(M, T^{1, 0} M)$.
It is natural to ask what happens when $(M, T^{1, 0} M)$ is non-embeddable.

\begin{problem}
	Assume that $(M, T^{1, 0} M)$ is non-embeddable.
	Does $\Paneitz$ have closed range?
	Equivalently,
	is $0$ an isolated point of $\Spec \Paneitz$?
\end{problem}

\section{Example: Rossi sphere}
\label{section:Rossi-sphere}

In this section,
we consider the Rossi sphere,
which is a closed homogeneous strictly pseudoconvex CR manifold of dimension three.
The unit sphere
\begin{equation}
	S^{3}
	\coloneqq \Set{(z, w) \in \bbC^{2} | \abs{z}^{2} + \abs{w}^{2} = 1}
\end{equation}
has the canonical CR structure $T^{1, 0} S^{3}$.
This CR structure is spanned by
\begin{equation}
	Z_{1}
	\coloneqq \ovw \frac{\del}{\del z} - \ovz \frac{\del}{\del w}.
\end{equation}
A canonical contact form $\theta$ on $S^{3}$ is given by
\begin{equation}
	\theta
	\coloneqq \frac{\sqrt{- 1}}{2} (z d \ovz + w d \ovw
		- \ovz d z - \ovw d w)|_{S^{3}}.
\end{equation}
Note that $(S^{3}, T^{1, 0} S^{3})$ is embeddable by definition.
For a real number $0 < \abs{t} < 1$,
set
\begin{equation}
	Z_{1}(t)
	\coloneqq Z_{1} + t Z_{\ovone}.
\end{equation}
This complex vector field satisfies
\begin{equation}
	\theta(Z_{1}(t)) = 0,
	\qquad
	d \theta(Z_{1}(t), \overline{Z_{1}(t)})
	= 1 - t^{2} > 0.
\end{equation}
This implies that
\begin{equation}
	(S^{3}_{t}, T^{1, 0} S^{3}_{t})
	\coloneqq (S^{3}, \bbC Z_{1}(t))
\end{equation}
is a strictly pseudoconvex CR manifold,
known as the \emph{Rossi sphere}~\cite{Rossi1965},
and $\theta$ is a contact form on $(S^{3}_{t}, T^{1, 0} S^{3}_{t})$.
It is known that any CR holomorphic function on $(S^{3}_{t}, T^{1, 0} S^{3}_{t})$ must be even;
see \cite{Chen-Shaw2001}*{Theorem 12.4.1} for example.
In particular,
$(S^{3}_{t}, T^{1, 0} S^{3}_{t})$ is non-embeddable.
Note that $(\rps^{3}_{t} \coloneqq S^{3}_{t} / \{\pm 1\}, T^{1, 0} \rps^{3}_{t})$
can be embedded to $\bbC^{3}$~\cite{Chen-Shaw2001}*{Chapter 12.4}.

Recently,
Abbas, Brown, Ramasami, and Zeytuncu~\cite{Abbas-Brown-Ramasami-Zeytuncu2019}*{Theorem 5.7}
have proved that $0$ is an accumulation point of the Kohn Laplacian
on $(S^{3}_{t}, T^{1, 0} S^{3}_{t}, \theta)$.
This and \cref{thm:Kohn-Laplacian-and-embeddability} give another proof that
$(S^{3}_{t}, T^{1, 0} S^{3}_{t})$ is non-embeddable.
By a similar argument to theirs,
we would like to show that the CR Paneitz operator $\Paneitz(t)$ on $(S^{3}_{t}, T^{1, 0} S^{3}_{t}, \theta)$
has infinitely many negative eigenvalues.

To this end,
we first recall some facts on spherical harmonics.
We denote by $\scrP_{p, q}(\bbC^{2})$
the space of complex homogeneous polynomials of bidegree $(p, q)$
and by $\scrH_{p, q}(\bbC^{2})$ the space of harmonic $f \in \scrP_{p, q}(\bbC^{2})$.
The restriction map
\begin{equation}
	|_{S^{3}} \colon \scrH_{p, q}(\bbC^{2}) \to C^{\infty}(S^{3})
\end{equation}
is injective since any $f \in \scrH_{p,q}$ is harmonic.
The image of $\scrH_{p, q}(\bbC^{2})$ under this map
is written as $\scrH_{p, q}(S^{3})$.
It is known that the Hilbert space $L^{2}(S^{3})$ has the orthogonal decomposition
\begin{equation}
	L^{2}(S^{3})
	= \bigoplus_{p, q} \scrH_{p, q}(S^{3}).
\end{equation}
Moreover,
$\dim \scrH_{p, q}(S^{3}) = p + q + 1$;
in particular,
$\scrH_{p, q}(S^{3})$ contains a non-zero element.

For each $k \in \bbZ_{> 0}$,
take $v_{k, 1} \in \scrH_{2 k - 1, 0}$ with $\norm{v_{k, 1}}_{L^{2}} = 1$.
We set $c_{k}(l) \coloneqq (l - 2) (2 k - l + 2)$ and
\begin{equation}
	v_{k, i}
	\coloneqq \rbra*{\prod_{l = 1}^{i - 1} \frac{1}{\sqrt{c_{k}(2 l + 1) c_{k}(2 l + 2)}}}
		(Z_{1})^{2 i - 2} v_{k, 1}
		\in \scrH_{2 k - 2 i + 1, 2 i - 2}
\end{equation}
for each integer $2 \leq i \leq k$.
Let $V_{k}$ be the subspace of $L^{2}(S^{3})$ spanned by $(v_{k, i})_{i = 1}^{k}$.
Note that the family $(v_{k, 1}, \dots, v_{k, k})$ is an orthonormal basis of $V_{k}$.
The CR Paneitz operator $P(t)$ maps $V_{k}$ to itself.
Let $\calP_{k}(t)$ be the matrix representation of $(1 - t^{2})^{2} P(t)$
with respect to the orthonormal basis $(v_{k, 1}, \dots, v_{k, k})$,
which is a $k \times k$ Hermitian matrix.
By using an explicit formula of $P(t)$,
we can show that $\det \calP_{k}(t) < 0$,
and so $\calP_{k}(t)$ has a negative eigenvalue.
In fact,
$\calP_{k}(t)$ has exactly one negative eigenvalue~\cite{Takeuchi2024-preprint}*{Proposition 5.7}.

\begin{theorem}[\cite{Takeuchi2024-preprint}*{Theorem 1.3}]
\label{thm:infinitely-many-negative-eigenvalue}
	The CR Paneitz operator $P(t)$ on the Rossi sphere $(S^{3}_{t}, T^{1, 0} S^{3}_{t}, \theta)$
	has infinitely many negative eigenvalues counted without multiplicity.
\end{theorem}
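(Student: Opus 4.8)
The plan is to leverage the block structure of $\Paneitz(t)$ relative to the orthogonal decomposition $L^{2}(S^{3}) = \bigoplus_{p, q} \scrH_{p, q}(S^{3})$ and to promote the finite-dimensional sign condition $\det \calP_{k}(t) < 0$ to a statement about the full operator. First I would observe that the subspaces $V_{k}$ are mutually orthogonal: by construction $v_{k, i} \in \scrH_{2 k - 2 i + 1, \, 2 i - 2}$, so every element of $V_{k}$ is a combination of spherical harmonics of total degree $(2 k - 2 i + 1) + (2 i - 2) = 2 k - 1$, independently of $i$. Since this common degree $2 k - 1$ differs for different $k$, and spherical harmonics of distinct degrees are orthogonal, the $V_{k}$ are pairwise orthogonal inside $L^{2}(S^{3})$.

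Next, for each $k \in \bbZ_{> 0}$ the matrix $\calP_{k}(t)$ is Hermitian with $\det \calP_{k}(t) < 0$, so it has a strictly negative eigenvalue; dividing by $(1 - t^{2})^{2} > 0$ shows that the restriction $\Paneitz(t)|_{V_{k}}$ has a negative eigenvalue $\lambda_{k} < 0$, realized by some nonzero $w_{k} \in V_{k}$. Because $V_{k}$ is invariant under $\Paneitz(t)$, the vector $w_{k}$ is genuinely an eigenvector of $\Paneitz(t)$ on $L^{2}(S^{3})$ with eigenvalue $\lambda_{k}$.

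I would then conclude by a counting argument based on finite multiplicity. Suppose, for contradiction, that $\Paneitz(t)$ had only finitely many distinct negative eigenvalues $\mu_{1}, \dots, \mu_{m}$. By \cref{thm:spectrum-of-CR-Paneitz} each nonzero eigenvalue has finite multiplicity, so $\bigoplus_{j = 1}^{m} \Ker(\Paneitz(t) - \mu_{j} I)$ is finite-dimensional. But every $w_{k}$ lies in this direct sum, and the $w_{k}$ are mutually orthogonal, being drawn from the pairwise orthogonal $V_{k}$, hence linearly independent. This exhibits infinitely many linearly independent vectors in a finite-dimensional space, a contradiction. Therefore $\Paneitz(t)$ has infinitely many distinct negative eigenvalues, which is precisely the assertion counted without multiplicity.

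Finally, I should emphasize where the real difficulty lies. The functional-analytic step above uses only orthogonality and the finite-multiplicity spectral theory of \cref{thm:spectrum-of-CR-Paneitz}, so it is soft. The genuine obstacle is the inequality $\det \calP_{k}(t) < 0$ quoted before the statement: establishing it requires an explicit formula for $\Paneitz(t)$ on $(S^{3}_{t}, T^{1, 0} S^{3}_{t}, \theta)$ together with a careful evaluation of the entries $\iproduct{\Paneitz(t) v_{k, i}}{v_{k, j}}$ via the action of $Z_{1}$ and $Z_{\ovone}$ on spherical harmonics. Once that determinant sign is in hand, the spectral framework of \cref{thm:spectrum-of-CR-Paneitz} is exactly what converts the per-block negativity into infinitely many distinct negative eigenvalues of the operator itself.
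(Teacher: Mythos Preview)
Your proposal is correct and follows essentially the same route as the paper: produce in each invariant block $V_{k}$ a negative eigenvector of $\Paneitz(t)$, observe that these blocks sit in spherical-harmonic summands of distinct total degree $2k-1$ (so the eigenvectors are linearly independent), and then invoke the finite-multiplicity part of \cref{thm:spectrum-of-CR-Paneitz} to force infinitely many distinct negative eigenvalues. Your contradiction phrasing and the explicit orthogonality check are just a more verbose unpacking of the paper's one-line conclusion.
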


\begin{proof}
	As we noted above,
	there exists an eigenfunction $0 \neq f_{k} \in V_{k}$ of $P(t)$ with negative eigenvalue
	for each positive integer $k$.
	Since
	\begin{equation}
		V_{k}
		\subset \bigoplus_{p + q = 2 k - 1} \scrH_{p, q}(S^{3}),
		\qquad
		L^{2}(S^{3}) = \bigoplus_{p, q} \scrH_{p, q}(S^{3}),
	\end{equation}
	the family $(f_{k})_{k = 1}^{\infty}$ is linearly independent.
	This and \cref{thm:spectrum-of-CR-Paneitz} imply that
	$P(t)$ has infinitely many negative eigenvalues without multiplicity.
\end{proof}

\bibliography{my-reference,my-reference-preprint}

\end{document}